\documentclass[12pt,final]{amsart}
\usepackage{amssymb,amsthm,amsmath}
\usepackage{graphicx}
\usepackage{fullpage}

\usepackage{mathtools}
\usepackage[colorlinks = true, urlcolor={blue}, citecolor={blue}]{hyperref}
\usepackage{float}

\usepackage{tikz,tikz-cd}

\newcommand{\var}{\operatorname{Var}}

\newcommand{\tmix}{t_{\operatorname{mix}}(\varepsilon)}

\newtheorem{theorem}{Theorem}[section]
\newtheorem{proposition}[theorem]{Proposition}
\newtheorem{lemma}[theorem]{Lemma}
\newtheorem{remark}[theorem]{Remark}

\newcommand{\Exp}[2]{\mathbb{E}_{#1}\left[#2\right]}

\newcommand{\Ent}{\operatorname{Ent}}
\newcommand{\Ecal}{\mathcal{E}}
\newcommand{\R}{\mathbb{R}}

\DeclarePairedDelimiterXPP{\nrm}[2]{}{\lVert}{\rVert}{\ensuremath{_{#1}}}{\ifblank{#2}{\:\cdot\:}{#2}}

\newcommand{\TV}[1]{\nrm*{TV}{#1}}

\title{Log-Sobolev under random monotone censoring}
\author[P. O. Santos, R. Tripathi, and P. Youssef]{Patrick Oliveira Santos \and Raghavendra Tripathi \and Pierre Youssef}

\address{Raghavendra Tripathi. Division of Science, NYU Abu Dhabi, Abu Dhabi, UAE}
\email{r.tripathi@nyu.edu}
\address{Patrick Oliveira Santos. Division of Science, NYU Abu Dhabi, Abu Dhabi, UAE}
\email{po2150@nyu.edu}
\address{Pierre Youssef. Center for Interdisciplinary Data Science and AI, NYUAD Research Institute, UAE \& 
Division of Science, NYU Abu Dhabi, Abu Dhabi, UAE \& 
Courant Institute of Mathematical Sciences, New York University, 251 Mercer st, New York,
NY 10012, USA.}
\email{yp27@nyu.edu}

\begin{document}

\begin{abstract}
We show that the logarithmic Sobolev inequality of the Boolean cube is stable under random monotone censoring. More precisely, if $A_n\subseteq \{0,1\}^n$ is chosen uniformly among all monotone subsets, then the logarithmic Sobolev constant of the censored walk on $A_n$ is of order $n$ with high probability. 
As a consequence, several analytic and probabilistic properties of the Boolean cube persist for a typical monotone subset: the censored semigroup is hypercontractive, the uniform measure on $A_n$  satisfies Gaussian concentration for Lipschitz observables, and the associated walk mixes in time $O(n\log n)$. The latter proves a conjectured mixing bound of Ding and Mossel for almost all monotone sets. 
The result is genuinely typical rather than universal. We construct monotone sets of density bounded away from zero whose logarithmic Sobolev constant is of order $n^2$. 

To prove the result, we establish a sharp logarithmic Sobolev inequality for Hamming caps and combine it with a harmonic extension argument transferring this inequality to monotone sets lying between nearby caps, together with a structural theorem of Korshunov on random monotone sets.

\smallskip
\noindent \textbf{Keywords.} Boolean cube, monotone sets, logarithmic Sobolev inequality, Markov chain mixing time, concentration of measure.
\end{abstract}

\maketitle
\section{Introduction}
\label{sec:intro}

The Boolean cube $\Omega_n=\{0,1\}^n$ is one of the fundamental objects in probability, combinatorics, and theoretical computer science. Its geometry is reflected in a collection of remarkable analytic phenomena, including hypercontractivity \cite{Bon, Beck, Gross}, logarithmic Sobolev inequalities \cite{Gross, Dia96LSI}, concentration of measure (see \cite{Led99Concentration, BLM}), and rapid mixing of the simple random walk (see \cite{Dia96LSI, S-C97}). 

Recall that $\Omega_n$ carries the coordinatewise partial order 
$$
x\preceq y\qquad \Longleftrightarrow \qquad x_i\le y_i,\quad 1\le i\le n.
$$
A subset $A\subseteq \Omega_n$ is called \emph{monotone} (or increasing) if $x\in A$ and $x\preceq y$ imply that $y\in A$. Equivalently, monotone subsets correspond to monotone Boolean functions. They form one of the most important and extensively studied classes of Boolean functions \cite{ODon14}. 
Monotone sets arise naturally in percolation \cite{BR06, Grim89} as monotone events, connectivity properties of random graphs \cite{Bollobas}, and statistical physics \cite{FKG, G06}. They are also closely connected to a rich theory that includes property testing, influences, and sharp thresholds~\cite{FK96, KKL88, KS06, GGLRS}. 

A natural question is whether monotone subsets retain the analytic properties of the ambient cube. More precisely, if one conditions the cube on a monotone event, do logarithmic Sobolev inequalities, concentration estimates, hypercontractivity, and mixing bounds remain essentially unchanged? 

In this paper, we study this question through the logarithmic Sobolev inequality. A natural Markov chain on $\Omega_n$ is the lazy simple random walk, which, at each step, chooses a coordinate uniformly at random and resamples its value. Given a monotone subset $A\subseteq \Omega_n$, the associated censored walk is obtained by suppressing all transitions from $A$ to $A^c$.  Equivalently, the walk evolves as the lazy simple random walk on the induced subgraph of $\Omega_n$ with vertex set $A$. The resulting Markov chain is reversible with respect to the uniform measure on $A$. 

Among the functional inequalities on the cube, the logarithmic Sobolev inequality occupies a distinguished position. It is equivalent to the hypercontractivity of the associated semigroup \cite{Gross} and implies both sub-Gaussian concentration inequalities \cite{Led99Concentration, BLM} and quantitative mixing estimates \cite{Dia96LSI, S-C97}. We write $t_{\operatorname{LS}}(A)$ for the smallest constant such that 
$$
\Ent_{\pi_A}(f^2)\leq t_{\operatorname{LS}}(A)\, \Ecal_{A}(f,f),
$$
for every function $f:\, A\to \R$, where $\pi_A$ denotes the uniform measure on $A$, and $\Ecal$ is the Dirichlet form of the censored walk. We refer to Section~\ref{sec:preliminaries} for background and conventions. For the full cube, the Bonami--Beckner--Gross~\cite{Bon,Beck,Gross} log-Sobolev inequality stipulates (in our normalization) that $t_{\operatorname{LS}}(\Omega_n)\asymp n$ (also see~\cite[Example 3.6]{salez2025modern}).

Ding and Mossel~\cite{ding2014mixing} initiated the study of the mixing time of the censored walk to a monotone set. They conjectured that for any monotone set $A\subseteq \Omega_n$ with $|A|\geq \alpha 2^{n}$ for some constant $\alpha \in (0, 1]$, the mixing time of the censored walk is bounded by $\kappa_\alpha\cdot n\log n$, where $\kappa_\alpha $ is a constant depending on $\alpha$ (but independent of $n$). We should recall that the mixing time of the random walk on $\Omega_n$ (equivalently, when $\alpha =1$) is known very precisely. Thus, the essence of Ding and Mossel's conjecture is that \emph{monotone censoring does not worsen the mixing time estimates in any significant way}. We refer the reader to \cite[Section 5.3.2]{fathi2019quelques} for a more general context in which this problem can be framed. We also refer the reader to~\cite{FK13, Hol11} for the effect of censoring on mixing properties. 
Ding and Mossel~\cite{ding2014mixing} used ideas from property testing to derive a sharp bound for the conductance, which, in turn, implies that the mixing time is $O_{\alpha }(n^3)$ for any monotone set $A$ with $|A|\geq \alpha 2^{n}$. Recently, Fei and Pinto Jr~\cite{fei2025spectral} (see also ~\cite{chang2025poincar} for an alternate proof) established the sharp Poincar\'e constant of order $O(n)$ for the censored Markov chain, improving the mixing time estimate to $O(n^2)$.  
While the results in \cite{fei2025spectral, chang2025poincar} identify the correct relaxation scale of the censored walk, Poincar\'e inequality inherently loses an additional factor $n$ when converted into a bound on the mixing time. In contrast, log-Sobolev inequality provides a sharper control (see \eqref{eq:LSI-mixing}) and an order-$n$ logarithmic Sobolev inequality would recover the conjectured mixing bound while simultaneously implying hypercontractivity and Gaussian concentration for the uniform measure on $A$. 

This naturally raises the question of whether a logarithmic Sobolev inequality comparable to that of the full cube persists under monotone censoring. 
Our main result shows that this is indeed the case for a typical monotone subset. 

\begin{theorem}
\label{thm:highprobLSI}
    There exist universal constants $c, C>0$ such that the following holds. Let $A_n$ be a uniformly chosen random monotone subset of $\Omega_n$. Then
    \[
    \mathbb{P}\left(c\cdot n\leq  t_{\operatorname{LS}}(A_n)\leq C\cdot n\right) = 1-o_n(1).
    \]
\end{theorem}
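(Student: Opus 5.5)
The plan follows the outline in the abstract: a sharp log-Sobolev inequality for Hamming caps, a harmonic-extension transfer to sets sandwiched between two nearby caps, and Korshunov's structure theorem for random monotone sets.

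\emph{Lower bound.} The bound $t_{\operatorname{LS}}(A_n)\ge c\,n$ should hold deterministically for every monotone $A$ of non-negligible size. The reason is that $t_{\operatorname{LS}}$ is at least a constant multiple of the relaxation time. I would test the entropy against a function depending on a single coordinate, or use a linear statistic such as $f(x)=\sum_i x_i - \mathbb{E}_{\pi_A}\sum_i x_i$. For such $f$ the Dirichlet form is of order $1$ in our normalization (each coordinate is resampled with probability $1/n$), while the variance under $\pi_A$ is of order $n$. Here $A$ is sandwiched between two caps, so the Hamming weight still has fluctuations of order $\sqrt n$. Together these give $t_{\operatorname{LS}}\ge 2\,t_{\operatorname{rel}}\gtrsim n$.

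\emph{Step 1: caps.} Let $B_k=\{x:|x|\ge k\}$ with $k=n/2+O(\sqrt n)$. I would prove $t_{\operatorname{LS}}(B_k)\le Cn$ uniformly for $|k-n/2|\le K\sqrt n$. My approach would be to decompose $\Ent(f^2)$ along the Hamming-weight projection:
\[
\Ent_{\pi}(f^2)=\Ent_{\bar\pi}(\bar{f^2})+\mathbb{E}_{\bar\pi}\,\Ent_{\pi(\cdot\mid |x|=j)}(f^2).
\]
The first term is the entropy of a one-dimensional marginal. The weight process of the censored walk is a birth–death chain on $\{k,\dots,n\}$, reflected at $k$, with binomial-type stationary law. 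Its log-Sobolev constant is $O(n)$, which follows from Hardy-type (Bobkov–Götze / Miclo) criteria for birth–death chains. The second term uses the log-Sobolev inequality for the Bernoulli–Laplace model on each slice, with constant $O(n)$ by Lee–Yau. The slice walk is realized through two-step moves (up then down) that stay in $B_k$ as long as $j>k$; the boundary slice $j=k$ is handled with down–up moves through $j=k+1$. Combining the two terms in the standard chain-decomposition (Jerrum–Son–Tetali–Vigoda) manner gives $O(n)$.

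\emph{Step 2: sandwiched sets.} Let $A$ be monotone with $B_{k+1}\subseteq A\subseteq B_k$. I would compare $A$ with $B_{k+1}$. For $f$ on $A$, let $g=f|_{B_{k+1}}$. The entropy splits into the entropy on $B_{k+1}$, which is controlled by Step 1 and by $\Ecal_A$ (the Dirichlet forms restrict monotonically), plus the contribution of the layer $A\cap\{|x|=k\}$. Each point $x$ in this layer has $n-k\approx n/2$ neighbours in $B_{k+1}$. Hence $f(x)^2$ can be bounded by $2\,\mathrm{avg}_{y\sim x,\,y\in B_{k+1}}f(y)^2$ plus a local gradient term of size $n\cdot\frac1n\sum_{y\sim x}(f(x)-f(y))^2$. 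This is the harmonic-extension comparison. Because the layer carries at most a constant fraction of the mass and the up-degree is of order $n$, both the entropy and the Dirichlet form change by at most constant factors. The same reasoning should extend to $B_{k+2}\subseteq A\subseteq B_k$.

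\emph{Step 3: Korshunov.} Korshunov's theorem states that with probability $1-o(1)$ a uniform monotone set satisfies $B_{\lfloor n/2\rfloor+2}\subseteq A_n\subseteq B_{\lfloor n/2\rfloor-1}$ or a similar sandwich of width $O(1)$ around the middle. Applying Step 2 to this width then yields the upper bound $t_{\operatorname{LS}}(A_n)\le Cn$.

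\emph{Main obstacle.} I expect the hardest part to be Step 1, namely obtaining the sharp $O(n)$ constant for caps near the middle. The weight marginal is a truncated binomial whose boundary at $k$ makes it non-log-concave in the needed sense, and gluing the slice inequalities without losing a factor of $n$ requires care. I would try the Hardy-type criterion for the marginal first, since it is explicit.
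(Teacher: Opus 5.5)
\textbf{What works.} Your lower bound is correct and differs from the paper's. Linearizing the log-Sobolev inequality at $1+\varepsilon(|x|-\mathbb{E}_{\pi_A}|x|)$ gives $t_{\operatorname{LS}}(A)\ge 2\var_{\pi_A}(|x|)/\Ecal_A(|x|,|x|)$. One always has $\Ecal_A(|x|,|x|)\le 1/4$, and on the Korshunov event $\var_{\pi_A}(|x|)\gtrsim n$. The paper instead uses the diameter lower bound of Salez--Youssef.

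Steps 1 and 3 follow the paper's architecture: entropy decomposition over levels, Lee--Yau on slices converted to up/down moves, and Korshunov's sandwich. Two small remarks on Step 1. First, your labels are swapped. The moves that require $j>k$ are the down--up moves through $E_{j-1}$, and these are also what keep the slices near the top at cost $O(n)$ rather than $O(n\log n)$. The bottom slice $j=k$ needs up--down moves through $E_{k+1}$. Second, the radial term needs no Hardy criterion. Applying the cube inequality to the radial extension $u(s\vee|x|)$ gives it directly, so the obstacle you anticipate does not arise.

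\textbf{The gap: Step 2.} You transfer the inequality from the smaller cap $B_{k+1}\subseteq A$ to $A$, i.e.\ by restriction. In that direction the Dirichlet forms compare for free, but the entropy comparison is the whole difficulty. Your pointwise bound (twice the average of $f^2$ over $U(x)$, plus a gradient term) does not give it. With $L=A\cap E_k$, the decomposition over $A=B_{k+1}\sqcup L$ leaves $\Ent_{\pi_L}(f^2)$ and a two-block term to control. Since $L$ has no internal edges, $\Ent_{\pi_L}(f^2)$ can be as large as $\mathbb{E}_{\pi_L}[f^2]\log|L|$ with $\log|L|\asymp n$. Entropy is not controlled by pointwise domination (a constant majorant has zero entropy). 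So your bound controls $\mathbb{E}_{\pi_A}f^2$, which is enough to transfer a Poincar\'e inequality, but not $\Ent_{\pi_A}(f^2)$.

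The paper's key choice is to go the other way. It extends $f$ from $A$ to the larger cap $H_s\supseteq A$, layer by layer, by averaging over upper neighbours. Then the entropy comparison is automatic: $\Ent_{\pi_s}(F^2)\ge \frac{|A|}{|H_s|}\Ent_{\pi_A}(f^2)$. Only the Dirichlet energy of the extension, a quadratic quantity, must be bounded. By monotonicity of $A$, a new vertex $x\in E_l\setminus A$ has no neighbours in $D_{l+1}$ other than $U(x)$. The new energy is handled by the variance identity on $U(x)$ and by routing each pair $y,z\in U(x)$ through $y\vee z\in E_{l+2}$. This costs a factor $\frac{|D_{l+1}|}{|D_l|}\bigl(1+\frac{l+2}{n-l}\bigr)$ per layer.

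The restriction route can be rescued, but only with a genuinely entropic argument. One would use Rothaus' lemma plus a Poincar\'e inequality on $A$, and entropy duality to push the harmonic part back onto $B_{k+1}$. For the non-harmonic remainder $r$ one would use $\Ent_{\pi_A}(r^2)\le \mathbb{E}_{\pi_A}[r^2]\log|A|$, where the up-degree $n-k\asymp n$ pays for $\log|A|\asymp n$.
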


Theorem~\ref{thm:highprobLSI} shows that, for a typical monotone subset, the logarithmic Sobolev inequality of the cube survives monotone censoring. Standard consequences of logarithmic Sobolev inequalities therefore extend to almost every monotone set. In particular, with probability $1-o_n(1)$, the censored semigroup on $A_n$ is hypercontractive, and the uniform measure on $A_n$ satisfies Gaussian concentration for Lipschitz observables. More precisely, there exists a universal constant $c>0$ such that, for every function $f:\, A_n\to \R$ which is 1-Lipschitz with respect to the Hamming distance,  
$$
\pi_{A_n}\big(\vert f-\Exp{\pi_{A_n}}{f}\vert \geq t\big) \leq 2e^{-\frac{ct^2}{n}},\qquad t\geq 0.
$$
Moreover, the associated walk mixes in time $O(n\log n)$; see \eqref{eq:LSI-mixing}. Thus, the conjectured Ding--Mossel \cite{ding2014mixing} mixing bound holds for a uniformly random monotone set.

The proof of Theorem~\ref{thm:highprobLSI} rests on two deterministic ingredients which may be of independent interest. First, we establish a sharp logarithmic Sobolev inequality for Hamming caps (see Theorem~\ref{thm:hamming-caps}). For any $0\leq s\leq n-1$ and $H_s=\{x\in\Omega_n:\ |x|\ge s\}$, we prove 
$$
t_{\operatorname{LS}}(H_s)
\asymp
n\log\frac{en}{n-s+1}.
$$
In particular, caps around the middle layer have the same order-$n$ logarithmic Sobolev constant as the full cube, while smaller caps exhibit the expected logarithmic deterioration. Second, we prove a comparison theorem showing that logarithmic Sobolev inequalities may be transferred from a Hamming cap to any monotone set trapped between two nearby caps (see Theorem~\ref{thm:sandwich-monotone-set}). The proof is based on a harmonic extension argument that fills in the missing layers one at a time while controlling the growth of the Dirichlet energy.
Finally, we invoke a theorem of Korshunov \cite{Korshunov03, Korshunov81} asserting that a uniformly chosen monotone subset differs from a suitable Hamming cap only within a bounded-width slab around the middle layer.

Theorem~\ref{thm:highprobLSI} should be read together with a worst-case result proved in Section~\ref{sec:large-LSI} where we show that for any $\alpha\in (0, 1/2]$, we have
$$
     \sup_{\substack{A\subseteq \Omega_n \text{ monotone}\\ |A|\geq \alpha |\Omega_n|} }t_{\operatorname{LS}}(A)  \asymp_\alpha  n^2.
    $$
This implies that the log-Sobolev analogue of the Ding–Mossel conjecture is false in full generality: there are monotone sets of density bounded away from zero whose log-Sobolev constant is of order $n^2$. Thus, Theorem~\ref{thm:highprobLSI} identifies a genuine typical-case phenomenon. Although an arbitrary dense monotone set can contain exponentially small traps that destroy the cube-scale log-Sobolev inequality, a uniformly random monotone set has no such obstruction; its log-Sobolev behavior is governed instead by the middle layers of the cube.

The paper is organized as follows. In Section~\ref{sec:preliminaries}, we set up the notations and formally introduce the definitions and structural results for random monotone subsets. Section~\ref{sec:LSI-caps} establishes a sharp logarithmic Sobolev inequality for Hamming caps. In Section~\ref{sec:LSI-sandwich} we prove a logarithmic Sobolev inequality for monotone sets lying between nearby caps and conclude the proof of Theorem~\ref{thm:highprobLSI}. Finally, in Section~\ref{sec:large-LSI} we investigate monotone subsets with a large logarithmic Sobolev constant.

\section{Preliminaries}\label{sec:preliminaries}

Throughout the paper, $|A|$ denotes the cardinality of a finite set $A$. We write $o_n(1)$ for a quantity tending to zero as $n\to\infty$. Given two sequences $a_n,b_n>0$ and $\alpha \in \R$, we write $a_n\asymp_\alpha b_n$ if there exist universal constants $c_{\alpha}, C_{\alpha}>0$ that depend only on $\alpha$ such that $c_{\alpha}b_n\leq a_n\leq C_{\alpha}b_n$ for all sufficiently large $n$. We denote $a_n \asymp b_n$ if $\alpha=1$, that is, whenever the constants $c, C>0$ above are universal.

Let $\Omega$ be a finite state space equipped with a probability measure $\pi$. We write $\Exp{\pi}{f}=\sum_{x\in\Omega} \pi(x) f(x)$ for the expectation,  
$$
\langle f,g\rangle_\pi= \Exp{\pi}{fg}
$$
for the associated inner product, and
$$
\Vert f\Vert_{L^2(\pi)}=\sqrt{\langle f,f\rangle_{\pi}}
$$
for the corresponding $L^2(\pi)$-norm. 

Let $P$ be a Markov kernel on $\Omega$ which is reversible with respect to $\pi$, namely 
$$
\pi(x)P(x,y)=\pi(y)P(y,x),\qquad x,y\in \Omega.
$$
The associated Dirichlet form is 
$$
\Ecal_P(f,g)=\langle f, (I-P)g\rangle_\pi.
$$
Equivalently, 
$$
\Ecal_P(f,g)=\frac12\sum_{x,y\in \Omega} \pi(x)P(x,y) (f(x)-f(y))(g(x)-g(y)).
$$
For a non-negative function $f:\, \Omega \to \R$, define the entropy 
$$
\Ent_{\pi}[f]= \Exp{\pi}{f\log f} - \Exp{\pi}f\log\Exp{\pi}{f}.
$$
The logarithmic Sobolev constant $t_{\operatorname{LS}}(P)$ is the smallest constant such that 
$$
\Ent_{\pi}[f^2] \leq t_{\operatorname{LS}}(P) \cdot \Ecal_P(f, f)
$$
for all $f:\, \Omega\to \R$. 
For any $\varepsilon>0$, the $\varepsilon$-\emph{mixing time} (or simply the mixing time) is defined as 
$$
   \tmix =\min \left\{k\geq 0: \sup_{\mu \in \mathcal{P}}\TV{\mu P^k-\pi}\leq \varepsilon\right\}, 
$$
where $\TV{\cdot}$ denotes the total variation norm and $\mathcal{P}$ denotes the set of all probability measures on $\Omega$. 

We now specialize to the Boolean cube $\Omega_n=\{0,1\}^n$. 
For $x=(x_1,\ldots,x_n)\in\Omega_n$, we write
$$
|x|=\sum_{i=1}^n x_i
$$
for its Hamming weight. For $x,y\in \Omega_n$, we write $x\vee y\in \Omega_n$ for the entrywise maximum 
\begin{align*}
    x\vee y=(\max\{x_1,y_1\},\ldots, \max\{x_n,y_n\}).
\end{align*}
For every $0\le k\le n$, let
$$
E_k=\{x\in\Omega_n:\ |x|=k\}
$$
denote the $k$-th Hamming layer, and
$$
H_s=\bigcup_{k\ge s}E_k
$$
the corresponding Hamming cap.  
For $x,y\in\Omega_n$, we write $x\sim y$ if $x$ and $y$ differ in exactly one coordinate. 

Let $A\subseteq\Omega_n$ be a monotone set. We write $\pi_A$ for the uniform measure on $A$. We use the convention of Ding and Mossel~\cite{ding2014mixing} to define the transition kernel of the censored walk on $A$ as
$$
P_A(x,y)=
\begin{cases}
\frac{1}{2n},& x\sim y,\ y\in A,\\
0,& x\neq y,\ y\notin A,
\end{cases}
$$
with the diagonal $P_A(x, x)$ chosen so that rows sum to one. Note that when $A=\Omega_n$, this is the lazy random walk on the cube.  
We write
$
\Ecal_A:=\Ecal_{P_A}
$
for the associated Dirichlet form. Explicitly,
$$
\Ecal_A(f,g)
=
\frac{1}{4n|A|}
\sum_{\substack{x,y\in A\\ x\sim y}}
(f(x)-f(y))(g(x)-g(y)).
$$
Similarly, we denote $
t_{\operatorname{LS}}(A):=
t_{\operatorname{LS}}(P_A).
$
A standard consequence of the logarithmic Sobolev inequality~\cite[Corollary 3.2]{salez2025modern} is
\begin{equation}\label{eq:LSI-mixing}
    \tmix \leq \frac{ t_{\operatorname{LS}}(A)}{4}\left(\log\log|A| + \log \frac{1}{2\varepsilon^2}\right). 
\end{equation}

Our proof relies on a structural property concerning random monotone subsets of the cube. Let
\[
m=\Big\lfloor \frac n2\Big\rfloor.
\]
Let $A_n$ be a uniform random monotone subset of $\Omega_n$. It follows from~\cite[Section 1.2]{Korshunov03} that there is a universal constant $K$ ($K$ can be taken to be $3$) such that 
\begin{equation}
\label{eqn:TypicalA}
    \mathbb{P}(H_{m+K}^{(n)}\subseteq A_n\subseteq H_{m-K}^{(n)})=1-o_n(1)\;.
\end{equation}
See Figure~\ref{fig:RandomMonotoneSet} for the illustration. 

\begin{figure}[H]
\centering
\begin{tikzpicture}[
    scale=1.05,
    line cap=round,
    line join=round,
    >=Latex,
    plane/.style={thick, dashed},
    boundary/.style={very thick, blue!55!black},
    labelstyle/.style={font=\small}
]

\coordinate (Top)    at (0,4);
\coordinate (Bottom) at (0,-4);
\coordinate (Left)   at (-3,0);
\coordinate (Right)  at (3,0);

\coordinate (Uleft)  at (-2.1,1.2);
\coordinate (Uright) at (2.1,1.2);

\coordinate (Mleft)  at (-3,0);
\coordinate (Mright) at (3,0);

\coordinate (Lleft)  at (-2.1,-1.2);
\coordinate (Lright) at (2.1,-1.2);

\coordinate (Gleft)  at (-2.78,0.30);
\coordinate (Gright) at (2.81,-0.25);

\path[fill=gray!7] (Top) -- (Right) -- (Bottom) -- (Left) -- cycle;

\path[fill=orange!25, opacity=0.45]
    (Uleft) -- (Uright) -- (Right) -- (Lright) --
    (Lleft) -- (Left) -- cycle;

\path[fill=blue!35, opacity=0.65]
    (Top) -- (Right) -- (Gright)
    .. controls (2.15,-0.75) and (1.35,0.95) .. (0.65,0.50)
    .. controls (0.05,0.15) and (-0.45,-0.90) .. (-1.15,-0.60)
    .. controls (-1.85,-0.30) and (-2.20,0.75) .. (Gleft)
    -- cycle;

\draw[very thick] (Top) -- (Right) -- (Bottom) -- (Left) -- cycle;

\draw[plane] (Uleft) -- (Uright);
\draw[plane] (Mleft) -- (Mright);
\draw[plane] (Lleft) -- (Lright);

\draw[boundary]
    (Gright)
    .. controls (2.15,-0.75) and (1.35,0.95) .. (0.65,0.50)
    .. controls (0.05,0.15) and (-0.45,-0.90) .. (-1.15,-0.60)
    .. controls (-1.85,-0.30) and (-2.20,0.75) .. (Gleft);

\node[labelstyle, above=3pt] at (Top) {$(1,1,\ldots,1)$};
\node[labelstyle, below=3pt] at (Bottom) {$(0,0,\ldots,0)$};

\node[labelstyle, right=6pt] at (Uright) {$|x|=m+K$};
\node[labelstyle, right=6pt] at (Mright) {$|x|=m$};
\node[labelstyle, right=6pt] at (Lright) {$|x|=m-K$};

\node[labelstyle, blue!55!black] at (0,2.55) {$H_{m+K}\subseteq A_n$};
\node[labelstyle, blue!55!black] at (-0.2,0.95) {$A_n$};
\node[labelstyle, orange!60!black] at (0,-0.95) {$H_{m-K}$};



\end{tikzpicture}
\caption{Illustration of a random monotone subset $A_n\subseteq \Omega_n$}
\label{fig:RandomMonotoneSet}
\end{figure}

\section{Log-Sobolev inequality for Hamming caps}\label{sec:LSI-caps}

The goal of this section is to prove the following estimate. 

\begin{theorem}[Sharp LSI for Hamming caps]\label{thm:hamming-caps}
For every $0\leq s< n$, 
\[
 t_{\operatorname{LS}}(H_s)  \asymp
        n\log\frac{en}{n-s+1}.
\]
\end{theorem}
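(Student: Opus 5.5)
The lower bound comes from two test functions; the upper bound comes from decomposing $H_s$ into Hamming layers. Write $j=n-s\ge 1$, so that $n-s+1=j+1$ and the target is $n\log\frac{en}{j+1}$.

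\emph{Lower bound.} There are two regimes.
\begin{itemize}
\item When $s\le n/2$, the factor $\log\frac{en}{j+1}$ is of constant order, so it suffices to show $t_{\operatorname{LS}}(H_s)\ge cn$. I would test the Poincaré inequality with $f(x)=x_1-\Exp{\pi_{H_s}}{x_1}$. Its variance is bounded below by a constant, since $x_1$ is roughly balanced on $H_s$. Its Dirichlet energy is of order $1/n$, because only edges in direction $1$ contribute. Then $t_{\operatorname{LS}}\ge 2t_{\mathrm{rel}}$ gives the claim.
\item When $s>n/2$, I take $f=\mathbf 1_{\{x_0\}}$ for a single point $x_0\in E_s$. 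Then $\Ent_{\pi}[f^2]\approx \log|H_s|/|H_s|$. The point $x_0$ has only $j$ neighbours inside $H_s$, so $\Ecal_{H_s}(f,f)=j/(4n|H_s|)$. The ratio is $\gtrsim n\log|H_s|/j$. Since $|H_s|\ge\binom{n}{j}$, we have $\log|H_s|\ge j\log(n/j)$, which gives $\gtrsim n\log\frac{en}{j+1}$. For $j=1$, $H_{n-1}$ is a star and the ratio is of order $n\log n$, as required.
\end{itemize}

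\emph{Upper bound.} I would split the entropy along the layer decomposition $H_s=\bigcup_{k\ge s}E_k$. Let $\nu$ be the law of $|x|$ under $\pi_{H_s}$, and set $\bar f(k)=\big(\Exp{\pi_{E_k}}{f^2}\big)^{1/2}$. The chain rule gives
\[
\Ent_{\pi_{H_s}}[f^2]=\sum_k\nu(k)\,\Ent_{\pi_{E_k}}[f^2]+\Ent_{\nu}[\bar f^{\,2}].
\]

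\emph{Step 1 (within-layer entropy).} On $E_k$ I use the Bernoulli--Laplace (transposition) walk. The Lee--Yau log-Sobolev bound for this walk, normalized to match our censored walk, is of order $n\log\frac{en}{\min(k,n-k)+1}$. Since $s>n/2$ in the relevant regime, this is worst for $k=s$, where it equals $n\log\frac{en}{j+1}$. Each transposition $x\to x-e_a+e_b$ can be routed inside $H_s$ by a path of length two. For $k=s$ I go up first, through $x+e_b\in E_{s+1}$; for $k>s$ I go down first. Each edge of $H_s$ is used by $O(1)$ such paths, with uniform weights. The comparison method then bounds the transposition Dirichlet form, averaged over $\nu$, by $O(1)\cdot\Ecal_{H_s}(f,f)$.

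\emph{Step 2 (layer marginal).} On $\{s,\dots,n\}$ I take the birth--death chain reversible for $\nu(k)\propto\binom nk$. Its rates are induced from the cube: going up from layer $k$ has rate proportional to $(n-k)/n$. The ratios $\nu(k+1)/\nu(k)=(n-k)/(k+1)$ decay geometrically once $k$ exceeds $n/2$. So the Bobkov--Götze/Miclo Hardy-type criterion for log-Sobolev on a path should give a constant of order $n\log\frac{en}{j+1}$; the $\log$ comes from $\nu(\{s\})$ being close to $1$ with $\log(1/\nu(k))$ growing. I would also need $\Ecal_\nu(\bar f,\bar f)\lesssim \Ecal_{H_s}(f,f)$. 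I would obtain this from the triangle inequality in $L^2$ applied to $\|f\|_{L^2(E_k)}-\|f\|_{L^2(E_{k+1})}$. This uses the fact that the up-edges between consecutive layers form a biregular bipartite graph, so averaging over it relates the two $L^2$ norms, up to a term again controlled by the within-layer energy from Step 1.

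\emph{Main obstacle.} I expect the hardest part to be the second half of Step 2: controlling the cross term in the decomposition theorem (in the spirit of Jerrum--Son--Tetali--Vigoda) uniformly in $s$. Near the top layers the layers are tiny and the bipartite graphs are very unbalanced, so the constants must not pick up an extra factor of $\log n$. There I would first try to exploit the biregularity directly, which gives a clean Cauchy--Schwarz inequality along the up-edges.
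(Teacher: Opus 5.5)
Your lower bound is fine. The point indicator is what the paper uses, and the test function $x_1$ is a valid alternative for $s\le n/2$; incidentally $\Ecal_{H_s}(\mathbf 1_{\{x_0\}},\mathbf 1_{\{x_0\}})=j/(2n|H_s|)$, which is harmless.

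The gap is in Step~1 of the upper bound. In your normalization the Bernoulli--Laplace constant on $E_k$ is $\asymp n\log\frac{en}{\min(k,n-k)+1}$. For $s>n/2$ this is \emph{smallest}, not largest, at $k=s$: every $k>s$ has $n-k<j$, and $k=n-1$ gives $n\log\frac{en}{2}$. Over $s\le k\le n-1$ the maximum is always of order $n\log n$, attained near the top layer (and also near $k=s$ when $s$ is small). A comparison constant that is uniformly $O(1)$, combined with the worst layer, therefore only yields $t_{\operatorname{LS}}(H_s)\lesssim n\log n$ for every $s$. This loses a factor $\log n$ whenever $n-s\asymp n$, e.g.\ for all $s\le n/2$. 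Your congestion count is also off. With up-first routing, an edge between $E_k$ and $E_{k+1}$ carries $\Theta(k)$ transposition paths; with down-first routing, an edge between $E_{k-1}$ and $E_k$ carries $\Theta(n-k)$. In your normalization the comparison constant on layer $k$ is thus of order $\min(k,n-k)/n$ under the right routing. Bounding it by $O(1)$ discards exactly the factor that must cancel $\log\frac{en}{\min(k,n-k)+1}$.

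The missing idea is to keep this layer dependence. With $\Ecal^{\operatorname{BL}}_k$ normalized as in Lee--Yau (constant $\frac{2}{\log 2}\log\frac{n^2}{k(n-k)}$), the paper proves $\Ecal^{\operatorname{BL}}_k(g,g)\le\frac{(k+1)(n-k)}{n}\mathbb{E}\big[(g(X_k^\uparrow)-g(X_k))^2\big]$, plus the downward analogue with factor $\frac{k(n-k+1)}{n}$. Set $\Gamma_k=p_k\frac{n-k}{n}\mathbb{E}\big[(f(X_k^\uparrow)-f(X_k))^2\big]$. Then
\[
p_k\Ent_{\nu_k}(f^2)\le\frac{2}{\log 2}\log\frac{n^2}{k(n-k)}\,\min\big((k+1)\Gamma_k,\,(n-k+1)\Gamma_{k-1}\big),
\]
and $\min(k+1,n-k+1)\log\frac{n^2}{k(n-k)}\le 2n$ removes the logarithm. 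So you must route up for $k\le n/2$ and down for $k>n/2$. Your rule ``down first for every $k>s$'' costs $n\log\frac{en}{k+1}$ on the layers $s<k\le n/2$. The only layer where the cheap direction is unavailable is $k=s$ with $s>n/2$, since its down-edges leave $H_s$. There you pay $(s+1)\log\frac{n^2}{s(n-s)}\lesssim n\log\frac{en}{n-s}$, and this single layer is the entire source of the logarithm in the theorem.

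Conversely, what you call the main obstacle is not one. The chain rule is exact, and since $X_k^\uparrow\sim\nu_{k+1}$, Minkowski gives $|\bar f(k+1)-\bar f(k)|\le\|f(X_k^\uparrow)-f(X_k)\|_{L^2}$ with no extra term. The radial chain also needs no Hardy criterion and has constant at most $2n$, with no logarithm. To see this, extend $u$ to the cube by $U(x)=u(s\vee|x|)$ and apply the cube's log-Sobolev inequality together with $\Ent_\pi(U^2)\ge\pi(H_s)\Ent_p(u^2)$.
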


We write $\pi_s$ for the uniform measure on $H_s$, and $\Ecal_s$ for the Dirichlet form of the censored walk on $H_s$. Let $p_k=\pi_s(E_k)$ and $\nu_k$ be the uniform measure on $E_k$. 

Throughout the section, we denote $X_k\sim \nu_k$, and we write $X_k^{\uparrow}$ (resp. $X_k^{\downarrow}$) for the random point obtained from $X_k$ by flipping a uniformly chosen zero-coordinate (resp. one-coordinate) to one (resp. zero). More precisely, conditionally on $X_k=x$, we have 
$$
\mathbb{P}(X_k^\uparrow=y\mid X_k=x)
=\frac{1}{n-k}\mathbf 1_{\{y\in E_{k+1},\, y\sim x\}}.
$$
Notice also that $X_k^{\uparrow}\sim\nu_{k+1}$ and that $(X_k,X_k^{\uparrow})$ has the same law as $(X_{k+1}^{\downarrow}, X_{k+1})$. By reversibility, every edge of the censored walk inside $H_s$ may be counted once, oriented from $E_k$ to $E_{k+1}$. Hence
\begin{align}
\Ecal_{H_s}(f,f)&= \frac{1}{2n|H_s|}
\sum_{k=s}^{n-1} \sum_{\substack{x\in E_k,\, y\in E_{k+1}\\ x\sim y}} (f(y)-f(x))^2  \nonumber \\
&=\sum_{k=s}^{n-1} p_k\frac{n-k}{2n}
\mathbb{E}{
\bigl(f(X_k^\uparrow)-f(X_k)\bigr)^2}\label{eq:Dir-up-walk},
\end{align}

Our starting point is the entropy decomposition with respect to the partition $H_s=\cup_{k\geq s}E_{k}$, which separates a radial contribution corresponding to the distribution across the Hamming levels, from a tangential contribution corresponding to the oscillation within each level.  

\begin{lemma}[Entropy decomposition]\label{lem:entropy-decomp}
For every $0\le s \le n$ and $f:\, H_s\to \R$, we have
$$     
\Ent_{\pi_s}(f^2) =\Ent_{p}(\bar{f}^2)+\sum_{k= s}^{n-1}p_{k}\Ent_{\nu_k}(f^2),
$$
where $\bar{f}:\, \{s,\ldots,n\}\to \R$ is defined by $\bar{f}(k)= (\mathbb{E}_{\nu_k}f^2)^{\frac12}=\Vert f(X_k)\Vert_{L^2}$. 
\end{lemma}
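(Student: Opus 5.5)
The plan is to prove the identity by a direct computation, i.e.\ the chain rule for entropy with respect to the partition $H_s=\bigcup_{k\ge s}E_k$. The one structural fact I will use is that $\pi_s$ is a mixture of the level measures: for $x\in E_k$ we have $\pi_s(x)=1/|H_s|=p_k\cdot 1/|E_k|=p_k\nu_k(x)$. Hence $\pi_s=\sum_{k=s}^n p_k\nu_k$, and for any $g$,
\[
\mathbb{E}_{\pi_s}[g]=\sum_{k=s}^n p_k\,\mathbb{E}_{\nu_k}[g].
\]

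First I apply this with $g=f^2$. This gives $\mathbb{E}_{\pi_s}f^2=\sum_k p_k\bar f(k)^2=\mathbb{E}_p[\bar f^2]$, where $p$ is viewed as a probability measure on $\{s,\dots,n\}$.

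Next, for each $k$ I expand the entropy on the $k$-th level from its definition:
\[
\mathbb{E}_{\nu_k}[f^2\log f^2]=\Ent_{\nu_k}(f^2)+\bar f(k)^2\log \bar f(k)^2 .
\]
Multiplying by $p_k$ and summing over $k$, the left-hand side becomes $\mathbb{E}_{\pi_s}[f^2\log f^2]$. The second terms on the right sum to $\mathbb{E}_p[\bar f^2\log\bar f^2]$.

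Then I subtract $\mathbb{E}_{\pi_s}f^2\log \mathbb{E}_{\pi_s}f^2=\mathbb{E}_p\bar f^2\log\mathbb{E}_p\bar f^2$ from both sides. The left-hand side becomes $\Ent_{\pi_s}(f^2)$, and the right-hand side becomes
\[
\Ent_p(\bar f^2)+\sum_{k=s}^n p_k\Ent_{\nu_k}(f^2).
\]

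Finally, to match the stated range of summation, I observe that $E_n=\{(1,\dots,1)\}$ is a single point. So $\nu_n$ is a Dirac mass, and $\Ent_{\nu_n}(f^2)=0$; the $k=n$ term can be dropped, leaving the sum over $s\le k\le n-1$.

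There is no real obstacle here. The only care needed is with the conventions: $0\log 0=0$ for levels where $f$ vanishes, and $p_k>0$ for all $k\ge s$, so every $\nu_k$ is well defined. The case $s=n$ is trivial, since then both sides are $0$.
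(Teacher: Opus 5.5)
Your computation is correct: it is the standard chain rule for entropy along the partition $H_s=\bigcup_{k\ge s}E_k$. The paper states the lemma without proof and uses the same decomposition for the partitions $\Omega_n=H_s\cup H_s^c$ and $H_s=A\cup(H_s\setminus A)$. Your observation that the $k=n$ term vanishes because $E_n$ is a single point correctly accounts for the summation range in the statement.
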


The radial contribution corresponds to a log-Sobolev inequality for a birth-death chain, which is well studied and follows, for instance, from the Hardy-type criteria for birth-death chains (see~\cite{Miclo}). In the present setting, however, the needed bound follows immediately from the log-Sobolev inequality on the full cube. We include the short argument for completeness.

\begin{proposition}[Radial contribution]
\label{prop:radial-contribution}
For every $0\leq s\le n-1$ and $f:\, H_s\to \R$, 
$$
\Ent_{p}(\bar{f}^2)\leq  2\,n\, \Ecal_{H_s}(f,f).
$$
\end{proposition}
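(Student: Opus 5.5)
The plan is to reduce to a one-dimensional birth--death chain on $\{s,\ldots,n\}$ and then prove a log-Sobolev inequality for that chain, borrowing its constant from the cube. There are three steps: compare Dirichlet forms, identify the projected chain as a truncation of the weight chain of the cube, and transfer the cube's log-Sobolev inequality to that truncation. The third step is the real difficulty.

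\emph{Step 1: Dirichlet form comparison.} Use the coupling $(X_k,X_k^\uparrow)$ with $X_k\sim\nu_k$ and $X_k^\uparrow\sim\nu_{k+1}$. By the reverse triangle inequality in $L^2$ of this coupling,
\[
\bigl|\bar f(k+1)-\bar f(k)\bigr| = \bigl|\,\|f(X_k^\uparrow)\|_{L^2}-\|f(X_k)\|_{L^2}\bigr| \le \bigl\|f(X_k^\uparrow)-f(X_k)\bigr\|_{L^2}.
\]
Inserting this into \eqref{eq:Dir-up-walk} gives
\[
\Ecal_{H_s}(f,f)\ \ge\ \sum_{k=s}^{n-1} p_k\,\frac{n-k}{2n}\,\bigl(\bar f(k+1)-\bar f(k)\bigr)^2 .
\]
The right-hand side is $\Ecal_Q(\bar f,\bar f)$, the Dirichlet form of the birth--death chain $Q$ on $\{s,\ldots,n\}$ with up-rates $b(k)=\frac{n-k}{2n}$ and down-rates $d(k)=\frac{k}{2n}$ for $k>s$, and $d(s)=0$. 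Since $p_k\propto\binom nk$, we have $p_k(n-k)=p_{k+1}(k+1)$, so $Q$ is reversible with respect to $p$. It therefore suffices to prove $\Ent_p(h^2)\le 2n\,\Ecal_Q(h,h)$ for every $h:\{s,\ldots,n\}\to\R$.

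\emph{Step 2: $Q$ as a truncated cube chain.} $Q$ is exactly the projection onto the Hamming weight of the lazy walk on $\Omega_n$, truncated to $[s,n]$. When $s=0$, apply the cube log-Sobolev inequality in our normalization, $t_{\operatorname{LS}}(\Omega_n)\le 2n$, to radial functions $x\mapsto h(|x|)$. For such functions the cube entropy and Dirichlet form coincide with $\Ent_p(h^2)$ and $\Ecal_Q(h,h)$, which gives the claim.

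\emph{Step 3: transferring to $s>0$ (main obstacle).} The issue is that conditioning the binomial law on $[s,n]$ must not cost more than a constant, uniformly in $s$. My first attempt is a curvature argument: $Q$ has a constant drift gap
\[
b(k)-b(k+1)+d(k+1)-d(k)=\tfrac1n ,
\]
and truncation only raises $d(s+1)-d(s)$. So $Q$ satisfies the same discrete Bakry--\'Emery-type condition as the untruncated weight chain, which has positive Ricci curvature $1/n$. The step I am unsure of is whether this condition yields a genuine log-Sobolev inequality with constant of order $n$, and not merely a modified log-Sobolev inequality.

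If that fails, the fallback is a direct argument. Extend $h$ to $\{0,\ldots,n\}$ by $\tilde h(k)=h(\max(k,s))$; this adds no Dirichlet energy below $s$. Then compare $\Ent_p(h^2)$ with $\Ent_{\mathrm{Bin}}(\tilde h^2)$ using the variational formula for entropy, or with Miclo's Hardy-type criterion for birth--death chains. The point to check is that the monotone likelihood ratio of the truncated binomial keeps the Hardy quantities bounded by those of the full binomial, which is what would deliver the constant $2n$.
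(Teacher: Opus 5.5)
Your Steps 1 and 2 are correct and match the paper's reduction: the $L^2$ triangle inequality reduces everything to the weight chain $Q$ on $\{s,\ldots,n\}$, which is reversible for $p$. The gap is Step 3. It is the whole content of the proposition, and you leave it open.

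The curvature route should be abandoned. A uniform drift gap $b(k)-b(k+1)+d(k+1)-d(k)\ge\kappa>0$ gives a spectral gap, and with monotone rates a modified log-Sobolev inequality, but it does not imply a log-Sobolev inequality. The M/M/$\infty$ chain, with $b(k)=\lambda$ and $d(k)=\mu k$, has drift gap exactly $\mu$ at every $k$. Yet its Poisson stationary law $\pi_\theta$ satisfies no inequality $\Ent(h^2)\le C\sum_k\pi_\theta(k)(h(k+1)-h(k))^2$: test $h(k)=e^{tk/2}$, for which the ratio grows like $\theta t$. The Hardy route does not deliver what you claim either. Miclo-type criteria determine the log-Sobolev constant of a birth--death chain only up to universal multiplicative factors. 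So they could give $Cn$, but never the constant $2n$ of the statement, and no likelihood-ratio analysis is needed.

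The missing step is elementary, and your fallback extension $\tilde h(k)=h(\max(k,s))$ is exactly the paper's; the paper writes it on the cube as $U(x)=u(s\vee|x|)$. Let $\beta$ be the $\mathrm{Bin}(n,1/2)$ law on $\{0,\ldots,n\}$ and $B=\{s,\ldots,n\}$, so that $p=\beta(\cdot\mid B)$. Let $\Ecal_\beta(g,g)=\sum_{k=0}^{n-1}\beta(k)\frac{n-k}{2n}(g(k+1)-g(k))^2$. Two facts finish the proof.

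\emph{Entropy restricts.} We have $\Ent_\beta(\tilde h^2)\ge\beta(B)\,\Ent_p(h^2)$. In the entropy decomposition along $B\cup B^c$, the entropy on $B^c$ and the entropy of the block averages are both nonnegative. Equivalently, write $\Ent_\mu(g)=\inf_{c>0}\mathbb{E}_\mu[g\log(g/c)-g+c]$, whose integrand is pointwise nonnegative, and drop $B^c$.

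\emph{Energy restricts.} All increments of $\tilde h$ below $s$ vanish, including the one from $s-1$ to $s$. Hence $\Ecal_\beta(\tilde h,\tilde h)=\beta(B)\,\Ecal_Q(h,h)$.

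Insert both into the $s=0$ inequality $\Ent_\beta(\tilde h^2)\le 2n\,\Ecal_\beta(\tilde h,\tilde h)$ from your Step 2. The factor $\beta(B)$ cancels and the constant $2n$ transfers exactly. This two-line argument is the paper's proof.
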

\begin{proof}
Given $0\leq s\le n-1$ and $u:\, \{s,\ldots, n\}\to \R$, we define its radial extension $U:\, \{0,1\}^n\to \R$ by $U(x)= u(s\vee |x|)$. Note that $U$ is constant on $H_s^c$ and $U(x)=u(|x|)$ on $H_s$. Let $X\sim \pi_s$. Since the law of $|X|$ under $\pi_s$ is $p=(p_s,\ldots,p_n)$, we have 
$$
\Ent_{\pi_s}(U^2\vert_{H_s})= \Ent_{p}(u^2)
$$
This, together with the entropy decomposition with respect to the partition $\Omega_n=H_s\cup H_s^c$, implies that  
$$
\Ent_{\pi}(U^2)\geq \pi(H_s)\Ent_{\pi_s}(U^2\vert_{H_s})= \pi(H_s) \Ent_{p}(u^2).
$$
Using the log-Sobolev inequality $\Ent_{\pi}(U^2)\leq 2n\Ecal_{\Omega_n}(U, U)$ for the full cube~\cite[Theorem 5.1]{BLM} and observing that 
$$
\Ecal_{\Omega_n}(U,U)= \pi(H_s) \sum_{k=s}^{n-1} p_k \frac{n-k}{2n} \big(u(k+1)-u(k)\big)^2,
$$
we deduce 
$$
\Ent_{p}(u^2)\leq 2\,n\,\sum_{k=s}^{n-1} p_k \frac{n-k}{2n} \big(u(k+1)-u(k)\big)^2. 
$$
Applying this to $u=\bar{f}$, and using the triangle inequality 
$$
|\bar{f}(k+1)-\bar{f}(k)|\leq  \Vert f(X_k^{\uparrow})-f(X_k)\Vert_{L^2},
$$
the result follows in view of \eqref{eq:Dir-up-walk}.

\end{proof}
We now turn to the entropy inside the Hamming levels. The natural auxiliary chain on $E_k$ is the Bernoulli-Laplace chain \cite[Section XV.2]{Feller1}: from $x\in E_k$, pick two coordinates $i$ and $j$ at random and swap their values, that is, we define a new $x^{(ij)}\in \Omega_n$ by swapping the $i$-th and the $j$-th coordinates of $x$ and keeping the rest of the coordinates unchanged. 

Following~\cite{LeeYau}, we define the Dirichlet energy of the Bernoulli-Laplace chain on $E_k$ as
\begin{align*}
    \Ecal^{\operatorname{BL}}_{k}(g,g):=\frac{1}{2n}\mathbb{E}\left[\sum_{i,j: i<j}\left(g(X_k)-g(X^{(ij)}_k)\right)^2\right]
\end{align*}
We rewrite the Dirichlet energy in a more convenient form. Given $X_k=x\in E_{k}$, let $(u, v)$ be such that $u$ is a uniformly random zero coordinate in $x$ and $v$ is a uniformly random one coordinate in $x$. Then, 
\begin{align}
\label{eqn:BL-Dir}
    \Ecal^{\operatorname{BL}}_{k}(g,g):=\frac{k(n-k)}{2n}\mathbb{E}\left[\left(g(X_k)-g(X^{(uv)}_k)\right)^2\right]\;,
\end{align}
where the expectation is with respect to the tuple $(X_k,u,v)$ where $X_k\sim \nu_k$.

\begin{lemma}[Up-down representation of Bernoulli-Laplace]\label{lem:up-down-BL}
   Let $1\leq k\leq n-1$. Then, for every $g:\, \Omega_n\to \R$, we have 
   $$
   \Ecal^{\operatorname{BL}}_k(g,g)\leq \frac{(k+1)(n-k)}{n} \mathbb{E}\left[(g(X^{\uparrow}_k)-g(X_k))^2\right], 
   $$
   and 
    $$
   \Ecal^{\operatorname{BL}}_k(g,g)\leq \frac{k(n-k+1)}{n} \mathbb{E}\left[(g(X^{\downarrow}_k)-g(X_k))^2\right]. 
   $$
\end{lemma}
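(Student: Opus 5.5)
The idea is to realise each swap $x\mapsto x^{(uv)}$ as a two-step path through the layer above or the layer below, and then apply $(a+b)^2\le 2a^2+2b^2$. For the first inequality: given $x\in E_k$ with a zero coordinate $u$ and a one coordinate $v$, let $z=x+e_u\in E_{k+1}$. Then $x^{(uv)}=z-e_v$, so
$$
\bigl(g(x)-g(x^{(uv)})\bigr)^2\le 2\bigl(g(z)-g(x)\bigr)^2+2\bigl(g(z)-g(x^{(uv)})\bigr)^2 .
$$
Taking expectation over $(X_k,u,v)$ as in \eqref{eqn:BL-Dir}, it remains to identify the laws of the pairs $(X_k,z)$ and $(x^{(uv)},z)$.

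For the first pair: conditionally on $X_k$, the coordinate $u$ is uniform among zeros, so $(X_k,z)\stackrel{d}{=}(X_k,X_k^\uparrow)$, regardless of $v$.

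For the second pair: the triple $(X_k,u,v)$ is uniform over all $(x,u,v)$ with $x_u=0$, $x_v=1$. The map $(x,u,v)\mapsto(x^{(uv)},v,u)$ is a bijection of this set onto itself. Under it, $z=x+e_u=x^{(uv)}+e_v$, and $v$ is a zero coordinate of $x^{(uv)}$. Hence $(x^{(uv)},z)$ also has the law of $(X_k,X_k^\uparrow)$.

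Combining these gives
$$
\mathbb{E}\bigl[(g(X_k)-g(X_k^{(uv)}))^2\bigr]\le 4\,\mathbb{E}\bigl[(g(X_k^\uparrow)-g(X_k))^2\bigr],
$$
and therefore
$$
\Ecal^{\operatorname{BL}}_k\le \frac{2k(n-k)}{n}\,\mathbb{E}\bigl[(g(X^\uparrow_k)-g(X_k))^2\bigr].
$$
This is the stated constant $\frac{(k+1)(n-k)}{n}$ only when $2k\le k+1$, that is, for $k\le 1$. So this crude Cauchy–Schwarz loses a factor and must be refined. The refinement is the main step.

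The loss comes from treating the two legs of each path separately. Instead, I will expand the square exactly. Write $a=g(z)-g(x)$ and $b=g(z)-g(x^{(uv)})$, so that $(a-b)^2=a^2+b^2-2ab$. By the symmetry above, $\mathbb{E}[a^2]=\mathbb{E}[b^2]=D:=\mathbb{E}[(g(X_k^\uparrow)-g(X_k))^2]$.

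For the cross term, condition on $z\in E_{k+1}$. Given $z$, the pair $(x,x^{(uv)})=(z-e_u,z-e_v)$ is a uniformly random ordered pair of distinct down-neighbours of $z$. Writing $h(w)=g(z)-g(w)$ over the $k+1$ down-neighbours $w$ of $z$, we get
$$
\mathbb{E}[ab\mid z]=\frac{\bigl(\sum_w h\bigr)^2-\sum_w h^2}{(k+1)k}\ \ge\ -\frac{\sum_w h^2}{k(k+1)}=-\frac{\mathbb{E}[a^2\mid z]}{k}.
$$
Therefore
$$
\mathbb{E}(a-b)^2\le \Bigl(2+\frac{2}{k}\Bigr)D=\frac{2(k+1)}{k}D,
$$
and so
$$
\Ecal^{\operatorname{BL}}_k(g,g)\le\frac{k(n-k)}{2n}\cdot\frac{2(k+1)}{k}\,D=\frac{(k+1)(n-k)}{n}\,D,
$$
which is exactly the first claim.

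The second inequality follows by the same argument routed through the layer below. Take $z'=x-e_v\in E_{k-1}$, so that $x^{(uv)}=z'+e_u$. Given $z'$, the pair $(x,x^{(uv)})$ is a uniformly random ordered pair of distinct up-neighbours of $z'$, of which there are $n-k+1$. The same variance bound gives the factor $\frac{2(n-k+1)}{n-k}$, and hence the constant $\frac{k(n-k+1)}{n}$. Alternatively, one can apply the first inequality to $g(\bar x)$ under complementation $x\mapsto\bar x$.

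The delicate point is verifying the conditional uniformity of the ordered pair given the middle vertex $z$ (respectively $z'$). This follows because $(X_k,u)$ is uniform over the edges between $E_k$ and $E_{k+1}$, and $v$ is then uniform among the remaining down-neighbours of $z$.
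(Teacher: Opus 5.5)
Your proof is correct and is essentially the paper's argument: both condition on the upper vertex $z=X_k^{\uparrow}$ and bound the spread of $g$ over the down-neighbours of $z$ by its second moment about $g(z)$, that is, they drop the squared mean in a variance. The only difference is cosmetic: you use a uniform ordered pair of \emph{distinct} down-neighbours and expand the cross term by hand, whereas the paper uses an independent pair and accounts for the collision probability $1/(k+1)$ in the constant (your preliminary crude bound is not needed).
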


\begin{proof}
We prove the upward estimate, since the downward one is identical upon exchanging zeros and ones. 
Given $X_k=x\in E_k$, let $I$ be a uniform random zero coordinate in $x$ and let $X^{\uparrow}_k=X_k+e_{I}\in E_{k+1}$. Let $J$ be a uniform random one coordinate in $X_k^\uparrow$ (independent of $I$), and let $Z_k=(X_k^\uparrow)^{\downarrow}=X_k^\uparrow-e_J$. Note that $X_k\ne Z_k$ with probability $k/(k+1)$. Furthermore, given $X_k\neq Z_k$, $(I, J)$ has the same distribution as $(u, v)$. In particular, 
\[
\mathbb{E}\left[\left(g(X_k)-g(Z_k)\right)^2\right] = \frac{k}{k+1}\mathbb{E}\left[\left(g(X_k)-g(X^{(uv)}_k)\right)^2\right]\;.
\]
Combining with~\eqref{eqn:BL-Dir}, we get
\[
    \mathcal{E}_k^{BL}(g, g) = \frac{(n-k)(k+1)}{2n} \mathbb{E}\left[\left(g(X_k)-g(Z_k)\right)^2\right]\;.
\]
We now observe that $X_k$ and $Z_k$ are independent given $X^\uparrow_k$, as $I$ and $J$ are independent. Furthermore, given $X_k^\uparrow$, $X_k$ and $Z_k$ have the same distribution as the distribution of $(X^\uparrow_k)^\downarrow$. Therefore,
    \[
    \frac{1}{2}\mathbb{E}\left[\left(g(X_k)-g(Z_k)\right)^2\vert X^\uparrow_k \right]=\var(g(X_k)\vert X_k^\uparrow)\leq \mathbb{E}\left[\left(g(X_k)-g(X_k^\uparrow)\right)^2\Big| X_k^\uparrow\right]\;.
    \]
The result follows by taking the expectation.
\end{proof}

\begin{proposition}[Tangential contribution]\label{prop:tangential-contribution}
    For every $1\leq s\leq n-1$ and $f:\, H_s\to \R$,
    $$
    \sum_{k= s}^{n-1} p_k\Ent_{\nu_k}(f^2)\leq 36\,n \log \frac{en}{n-s}\, \Ecal_{H_s}(f,f). 
    $$
\end{proposition}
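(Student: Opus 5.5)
The plan is to bound each level entropy $\Ent_{\nu_k}(f^2)$ by the Bernoulli--Laplace Dirichlet energy on $E_k$, using the sharp log-Sobolev inequality of Lee and Yau~\cite{LeeYau}. Lemma~\ref{lem:up-down-BL} then converts that energy into the upward edge energy between $E_k$ and $E_{k+1}$. Summing over $k$ and comparing with \eqref{eq:Dir-up-walk} will give the claim.

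\emph{Step 1 (Lee--Yau).} For $1\le k\le n-1$ the Bernoulli--Laplace chain on $E_k$ satisfies
$$
\Ent_{\nu_k}(g^2)\le C_0\log\frac{n^2}{k(n-k)}\,\Ecal^{\operatorname{BL}}_k(g,g)
$$
for a universal constant $C_0$. Lee and Yau state this for the generator $\frac1n\sum_{i<j}(g(x^{(ij)})-g(x))$, whose spectral gap is of order one and whose log-Sobolev constant is of order $1/\log\frac{n^2}{k(n-k)}$. Their Dirichlet form agrees with our $\Ecal^{\operatorname{BL}}_k$ up to a factor $2$, which is absorbed into $C_0$.

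\emph{Step 2 (up-down comparison).} Apply Lemma~\ref{lem:up-down-BL} (upward version) with $g=f$. This is legitimate: for $k\ge s$ both $X_k$ and $X_k^\uparrow$ lie in $H_s$, and $f$ may be extended arbitrarily outside $H_s$. We get
$$
p_k\Ent_{\nu_k}(f^2)\le C_0\log\frac{n^2}{k(n-k)}\cdot\frac{(k+1)(n-k)}{n}\,p_k\,\mathbb{E}\bigl(f(X_k^\uparrow)-f(X_k)\bigr)^2 .
$$
The $k$-th term of \eqref{eq:Dir-up-walk} is $p_k\frac{n-k}{2n}\mathbb{E}(f(X_k^\uparrow)-f(X_k))^2$. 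So the $k$-th level entropy is at most $2C_0(k+1)\log\frac{n^2}{k(n-k)}$ times the $k$-th summand of $\Ecal_{H_s}(f,f)$. Level $k=n$ is a single point with zero entropy, so it can be dropped.

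\emph{Step 3 (elementary bound).} It remains to show $(k+1)\log\frac{n^2}{k(n-k)}\le C' n\log\frac{en}{n-s}$ for all $s\le k\le n-1$.
\begin{itemize}
\item If $k\ge n/2$, then $\frac{n^2}{k(n-k)}\le\frac{2n}{n-k}$. Since $n-k\le n-s$, the left side is at most $n\log\frac{2en}{n-s}$.
\item If $1\le k<n/2$, then $\frac{n^2}{k(n-k)}\le\frac{2n}{k}$. The function $k\mapsto(k+1)\log\frac{2n}{k}$ is $O(n)$ on $[1,n/2]$, and $\log\frac{en}{n-s}\ge1$, so the bound holds here too.
\end{itemize}
Summing over $k$ then yields the proposition with a universal constant. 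Tracking the explicit constants should give $36$, but only the order matters.

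The main obstacle is Step 1. One must be careful that the Lee--Yau constant, in exactly this normalization of $\Ecal^{\operatorname{BL}}_k$, scales like $\log\frac{n^2}{k(n-k)}$ with no extra factor of $n$. The factor $k+1$ from Lemma~\ref{lem:up-down-BL} must be the only source of the $n$ in the final bound. If the normalization were off by a factor of $n$, the comparison in Step 2 would lose a power of $n$, so this should be checked against the spectral gap of the chain, which is of order one in our normalization.
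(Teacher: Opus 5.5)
Your Steps 1 and 2 are fine (the Lee--Yau normalization is not the real issue). Step 3 is false, however, and not merely up to constants. In the case $k\ge n/2$ you use $n-k\le n-s$ to bound $\log\frac{2n}{n-k}$ from above. But $\log\frac{2n}{n-k}$ is decreasing in $n-k$, so this inequality goes the wrong way. At $k=n-1$ your factor is $(k+1)\log\frac{n^2}{k(n-k)}=n\log\frac{n^2}{n-1}\sim n\log n$, while for every $s\le n/2$ the target $n\log\frac{en}{n-s}$ is at most $n\log(2e)$. Charging each level only to the edges going up from it therefore gives at best $\sum_{k\ge s}p_k\Ent_{\nu_k}(f^2)\lesssim n\log n\,\Ecal_{H_s}(f,f)$ for caps near the middle. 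That is exactly the loss that would ruin Theorem~\ref{thm:highprobLSI}. The loss is intrinsic to the upward-only scheme, not an artifact of your elementary estimate. For $f=\mathbf 1_{\{x_0\}}$ with $x_0\in E_{n-1}$ one has $\Ent_{\nu_{n-1}}(f^2)=\frac{\log n}{n}$, whereas $\frac{1}{2n}\mathbb{E}\bigl(f(X_{n-1}^\uparrow)-f(X_{n-1})\bigr)^2=\frac{1}{2n^2}$. So the edges from $E_{n-1}$ to the top vertex alone cannot control the entropy on $E_{n-1}$ with a factor better than $2n\log n$.

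The missing idea is to use the \emph{downward} inequality of Lemma~\ref{lem:up-down-BL} on the upper half of the cap. Since $p_k=\frac{n-k+1}{k}p_{k-1}$ and $(X_k^\downarrow,X_k)$ has the law of $(X_{k-1},X_{k-1}^\uparrow)$, the downward energy $p_k\frac{k(n-k+1)}{n}\mathbb{E}\bigl(f(X_k^\downarrow)-f(X_k)\bigr)^2$ equals $2(n-k+1)$ times the $(k-1)$-th summand of \eqref{eq:Dir-up-walk}. This rewriting is available as soon as $k\ge s+1$. The resulting factor $(n-k+1)\log\frac{n^2}{k(n-k)}$ is $O(n)$ for $k\ge n/2$, just as $(k+1)\log\frac{n^2}{k(n-k)}$ is $O(n)$ for $k\le n/2$. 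Hence all levels $k\ge s+1$ together cost $O(n)\,\Ecal_{H_s}(f,f)$, with each layer of edges charged at most twice. Only the bottom level $k=s$ has no lower neighbours inside $H_s$, so there you are forced to use the upward bound. Its factor $(s+1)\log\frac{n^2}{s(n-s)}\lesssim n\log\frac{en}{n-s}$ is the sole source of the logarithm in the statement.
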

\begin{proof}
For every $1\leq k\leq n-1$, we make use of the sharp log-Sobolev inequality for Bernoulli-Laplace (see \cite[Theorem~5]{LeeYau})
    $$
    \Ent_{\nu_k}(f^2)\leq \frac{2}{\log 2} \,\log \frac{n^2}{k(n-k)} \Ecal_{k}^{BL}(f,f).
    $$
   Combined with Lemma~\ref{lem:up-down-BL}, this implies   
   \begin{equation}\label{eq:entropy-upwalk}
       \Ent_{\nu_k}(f^2)\leq \frac{2}{\log 2} \, \frac{(k+1)(n-k)}{n}\log \frac{n^2}{k(n-k)} \mathbb{E}{(f(X^{\uparrow}_k)-f(X_k))^2}\;,
   \end{equation}
    and 
      \begin{equation}\label{eq:entropy-downwalk}
    \Ent_{\nu_k}(f^2)\leq \frac{2}{\log 2} \, \frac{k(n-k+1)}{n}\log \frac{n^2}{k(n-k)} \mathbb{E}{(f(X^{\downarrow}_k)-f(X_k))^2}.
   \end{equation}
Denote, for brevity, 
\begin{align*}
    \Gamma_k=p_k\frac{n-k}{n}\mathbb{E}{\bigl(f(X^\uparrow_k)-f(X_k)\bigr)^2}.
\end{align*}
Using that $p_k=\frac{n-k+1}{k} p_{k-1}$ and $(X_k^{\downarrow}, X_k)\sim (X_{k-1},X_{k-1}^{\uparrow})$, the inequalities \eqref{eq:entropy-upwalk} and \eqref{eq:entropy-downwalk} reduce to 
$$
p_k\Ent_{\nu_k}(f^2)\leq \frac{2}{\log 2} \log \frac{n^2}{k(n-k)} \, \min\big((k+1)\Gamma_{k}, (n-k+1)\Gamma_{k-1}\big).
$$
Using the elementary relation $\min (k+1, n-k+1)\, \log \frac{n^2}{k(n-k)}\leq 2n$, we get that 
$$
\begin{cases}
    p_k\Ent_{\nu_k}(f^2)\leq \frac{4n}{\log 2} \Gamma_{k}  & \text{ if } k\leq n-k;\\
    & \\
     p_k\Ent_{\nu_k}(f^2)\leq \frac{4n}{\log 2} \Gamma_{k-1}  & \text{ otherwise.}
\end{cases}
$$
Summing over $k> s$ and using \eqref{eq:Dir-up-walk}, we get 
$$
\sum_{k=s+1}^{n-1} p_k\Ent_{\nu_k}(f^2)\leq \frac{16\,n}{\log 2}\, \Ecal_{H_s}(f,f).
$$
Finally, we use \eqref{eq:entropy-upwalk} with $k=s$ to get
$$
p_s\Ent_{\nu_s}(f^2)\leq \frac{2(s+1)}{\log 2} \log \frac{n^2}{s(n-s)} \, \Ecal_{H_s}(f,f)\leq \frac{8\,n}{\log 2} \log \frac{en}{n-s}\, \Ecal_{H_s}(f,f). 
$$
Combining the above and using that $\frac{24}{\log 2}\leq 36$, we finish the proof. 
\end{proof}

\begin{proof}[Proof of Theorem~\ref{thm:hamming-caps}]
The case $s=0$ corresponds to the full cube. Therefore, we assume $1\leq s\leq n-1$. The upper bound readily follows by combining Lemma~\ref{lem:entropy-decomp} with Propositions~\ref{prop:radial-contribution} and \ref{prop:tangential-contribution}.

To get the lower bound, fix $z\in E_s$ and take $f=\mathbf 1_{\{z\}}$. Then, we have 
$$
\Ent_{\pi_s}(f^2)=\frac{\log |H_s|}{|H_s|}. 
$$
On the other hand, $z$ has exactly $n-s$ neighbors inside $H_s$. Therefore, 
$$
\Ecal_{H_s}(f,f)= \frac{n-s}{2n |H_s|}. 
$$
Combining the two identities, we get 
$$
\frac{\Ent_{\pi_s}(f^2)}{\Ecal_{H_s}(f,f)}= \frac{2n\log |H_s|}{n-s}.
$$
It remains to use that $|H_s|\geq \binom{n}{s}$ if $s\geq n/2$ and that $|H_s|\geq 2^{n-1}$ when $s<n/2$. 
\end{proof}

\section{The log-Sobolev inequality for sandwiched monotone sets}\label{sec:LSI-sandwich}

The goal of this section is to transfer the logarithmic Sobolev inequality from Hamming caps to monotone sets lying between two nearby caps. The main ingredient is a one-layer extension lemma: whenever a Hamming layer is added, functions can be extended harmonically to the new vertices while increasing the Dirichlet energy by at most a constant factor. Iterating this extension across the layers of the sandwich yields a comparison theorem between the logarithmic Sobolev constants of the monotone set and the surrounding cap.

\begin{lemma}[One layer harmonic extension]\label{lem:one-step-extension}
    Let $A\subseteq \Omega_n$ be a monotone subset, and for every $0\leq l\leq n-1$, set $D_l=A\cup H_l$. For every $0\leq l\leq n-1$ and every function $f:\, D_{l+1}\to \R$, there exists an extension $F:\, D_{l}\to \R$ such that $F_{\mid D_{l+1}}=f$ and 
    $$
    \Ecal_{D_l}(F,F)\leq \frac{|D_{l+1}|}{|D_l|} \Big(1+\frac{l+2}{n-l}\Big)\, \Ecal_{D_{l+1}}(f,f). 
    $$
\end{lemma}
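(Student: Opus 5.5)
The plan is to extend $f$ to the new vertices $N:=D_l\setminus D_{l+1}$ by averaging over their neighbours inside $D_{l+1}$, and then to charge the extra Dirichlet energy to edges between layers $E_{l+1}$ and $E_{l+2}$, which already lie in $D_{l+1}$.

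\emph{Structure of $N$.} Since $H_{l+1}\subseteq D_{l+1}$ and $D_l=D_{l+1}\cup E_l$, we have $N\subseteq E_l\setminus A$. Fix $x\in N$. Its down-neighbours $w\in E_{l-1}$ are not in $D_l$. Indeed, $w\notin H_l$, and $w\in A$ would force $x\in A$ by monotonicity. Its up-neighbours (exactly $n-l$ of them) all lie in $E_{l+1}\subseteq D_{l+1}$. Since $N\subseteq E_l$, no two points of $N$ are adjacent. So each $x\in N$ is joined in $D_l$ only to its $n-l$ up-neighbours, all in $D_{l+1}$. Define
$$F(x)=\frac1{n-l}\sum_{y\in E_{l+1},\,y\sim x} f(y),\qquad x\in N,$$
and $F=f$ on $D_{l+1}$. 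This is the harmonic extension.

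\emph{Energy bookkeeping.} Writing $\Ecal_{D}(\cdot,\cdot)=\frac1{4n|D|}\sum_{x\sim y}$ over ordered adjacent pairs, the edges inside $D_{l+1}$ contribute $4n|D_{l+1}|\,\Ecal_{D_{l+1}}(f,f)$ to the raw sum. This accounts for the prefactor $|D_{l+1}|/|D_l|$. It remains to show that the new edges satisfy
$$\sum_{x\in N}\sum_{y\sim x,\,y\in E_{l+1}}(F(x)-f(y))^2\le \frac{l+2}{n-l}\cdot \tfrac12\sum_{\substack{y\in E_{l+1},z\in E_{l+2}\\ y\sim z}}(f(y)-f(z))^2 .$$
The variance identity gives
$$\sum_y (F(x)-f(y))^2=\frac1{2(n-l)}\sum_{y,y'}(f(y)-f(y')),^2$$
where the sum is over ordered pairs of up-neighbours of $x$. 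Two distinct up-neighbours $y=x+e_i$ and $y'=x+e_j$ have the common upper neighbour $z=x+e_i+e_j\in E_{l+2}\subseteq D_{l+1}$. Hence
$$(f(y)-f(y'))^2\le 2\big[(f(y)-f(z))^2+(f(z)-f(y'))^2\big].$$
For $l=n-1$ there is only one up-neighbour, the variance vanishes, and the bound is trivial.

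\emph{Counting.} Fix an edge $(y,z)$ with $z=y+e_i$. The triples $(x,y,y')$ using it as a $(y,z)$-segment are determined by the choice of a one-coordinate $j$ of $y$ (set $x=y-e_j$), giving $l+1$ choices. Such a triple is counted once with $y$ in the first slot and once with $y$ in the second slot. Summing over $x$ (extending to all of $E_l$ only increases the sum), this gives a bound of the form $\frac{c(l+1)}{n-l}$ times the $E_{l+1}$–$E_{l+2}$ edge energy, which is part of $\Ecal_{D_{l+1}}(f,f)$. Combined with the unchanged old edges, this yields $\Ecal_{D_l}(F,F)\le \frac{|D_{l+1}|}{|D_l|}\big(1+\frac{c(l+1)}{n-l}\big)\Ecal_{D_{l+1}}(f,f)$.

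\emph{Main obstacle.} The crude $(a+b)^2\le 2a^2+2b^2$ step gives $c=2$. To reach exactly $\frac{l+2}{n-l}$, I would avoid it and compare the variance over the up-neighbours of $x$ directly with a Poincar\'e-type inequality for the star graph through the $E_{l+2}$ layer. One option is to write $f(y)-f(y')$ as a sum of two edge increments and use the averaged Cauchy–Schwarz, in the style of the proof of Lemma~\ref{lem:up-down-BL}, via conditional independence given $z$. The qualitative content, a $1+O(\frac{l}{n-l})$ factor, is robust; the delicate point is only getting this exact constant.
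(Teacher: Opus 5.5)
\textbf{Verdict: genuine gap.} Your argument proves the lemma only with $\frac{2(l+1)}{n-l}$ in place of $\frac{l+2}{n-l}$.

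Your extension $F$, the fact that each $x\in N$ is joined in $D_l$ only to its $n-l$ up-neighbours $U(x)$, the energy decomposition and the variance identity are exactly as in the paper. The gap is the constant. With $(a+b)^2\le 2a^2+2b^2$ your count gives $\frac{2(l+1)}{n-l}$, which is strictly larger than $\frac{l+2}{n-l}$ for every $l\ge 1$. So the lemma as stated, and the exact product in Theorem~\ref{thm:sandwich-monotone-set}, are not proved. The weaker factor is still $O(1)$ for $|l-n/2|\le K$ and would suffice for Theorem~\ref{thm:highprobLSI}, but that is a different statement.

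The first repair you suggest, comparing the variance on $U(x)$ for a single $x$, cannot work. For fixed $x$, the pair $y=x+e_i$, $y'=x+e_j$ reaches $E_{l+2}$ only through its own vertex $x+e_i+e_j$. Setting $f(x+e_i+e_j)=\frac{f(y)+f(y')}{2}$ makes $(a+b)^2\le 2a^2+2b^2$ an equality. So any comparison done $x$ by $x$ through these two-step paths, followed by summation, loses the same factor $2$.

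The missing step is to sum over $x$ first and then regroup by the common \emph{upper} neighbour. Take an ordered pair $(y,y')$ of distinct points of $E_{l+1}$ at distance $2$. It has exactly one common lower neighbour, $y\wedge y'$, and exactly one common upper neighbour, $w=y\vee y'\in E_{l+2}$. Writing $L(w)$ for the set of the $l+2$ lower neighbours of $w$, this gives
\[
\sum_{x\in N}\sum_{y,y'\in U(x)}(f(y)-f(y'))^2\le\sum_{w\in E_{l+2}}\sum_{y,y'\in L(w)}(f(y)-f(y'))^2 .
\]
On each $L(w)$, the variance identity and the fact that the mean minimises quadratic deviation give
\[
\frac12\sum_{y,y'\in L(w)}(f(y)-f(y'))^2=(l+2)\sum_{y\in L(w)}\bigl(f(y)-\bar f_{L(w)}\bigr)^2\le (l+2)\sum_{y\in L(w)}(f(y)-f(w))^2 ,
\]
where $\bar f_{L(w)}$ is the average of $f$ on $L(w)$. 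This is the precise form of your ``conditional independence given $z$'' idea. A single value $f(w)$ serves all pairs in $L(w)$ at once, so the cross terms are controlled collectively rather than pair by pair, which the per-pair inequality cannot do. Combine this with $\sum_{y\in U(x)}(F(x)-f(y))^2=\frac{1}{2(n-l)}\sum_{y,y'\in U(x)}(f(y)-f(y'))^2$. The new-edge sum is then at most $\frac{l+2}{n-l}\sum_{w\in E_{l+2}}\sum_{y\in L(w)}(f(y)-f(w))^2$, which is exactly what is needed.

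Your displayed target also has a spurious $\frac12$. Since $\Ecal_{D_{l+1}}(f,f)$ equals $\frac{1}{2n|D_{l+1}|}$ times the sum over unordered edges, the requirement counts each $E_{l+1}$--$E_{l+2}$ edge once, with no $\frac12$. With the $\frac12$ the target is false. Take $l=0$, $f(e_1)=1$, $f(e_j)=0$ for $j\ge 2$, $f(e_1+e_j)=\frac12$, and $f=0$ elsewhere on $E_2$. The left side is $\frac{n-1}{n}$ while your right side is $\frac{n-1}{2n}$; the paper's bound holds with equality here.
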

\begin{proof}
 Fix $0\leq l\leq n-1$, and note that $D_l\setminus D_{l+1}=E_l\setminus A$. For $x\in E_l\setminus A$, let 
 $$
 U(x)=\{y\in E_{l+1}:\, y\sim x\}
 $$
 be the set of upper neighbors of $x$. Since $H_{l+1}\subseteq D_{l+1}$, all these neighbors belong to $D_{l+1}$. 
 Let $f:\, D_{l+1}\to \R$ and define 
 $$
 F(x)=\frac{1}{n-l}\sum_{y\in U(x)} f(y),\quad x\in E_l\setminus A,
 $$
 and set $F=f$ on $D_{l+1}$. 

Let $x\in E_l\setminus A$. We claim that every neighbor of $x$ in $D_{l+1}$ belongs to $U(x)$. Indeed, let $y\in E_{l-1}$ be a neighbor of $x$. Then $y\not\in H_{l+1}$, and if $y\in A$, monotonicity of $A$ would imply that $x\in A$, a contradiction. Therefore $y\not\in D_{l+1}$. 

Thus, the only new edges created when passing from $D_{l+1}$ to $D_l$ are the edges joining $x\in E_l\setminus A$ to its upper neighbors in $U(x)$. Therefore 
\begin{equation}
\label{eq:decomp-Dirichlet}
    \Ecal_{D_{l}}(F, F) = \frac{|D_{l+1}|}{|D_{l}|}\Ecal_{D_{l+1}}(f, f) + \frac{1}{2n|D_{l}|}\sum_{x\in E_l\setminus A}\sum_{y\in U(x)}(F(x)-f(y))^2\;.
\end{equation}
Since $F(x)$ is the average of $f$ on $U(x)$, the variance identity yields 
$$
    \sum_{y\in U(x)}(F(x)-f(y))^2= \frac{1}{2(n-l)} \sum_{y,z\in U(x)} (f(y)-f(z))^2. 
$$
Summing over $x$, and using that every pair $y,z\in E_{l+1}$ with $|y-z|=2$ has at most one common lower neighbor in $E_l$, we get 
\begin{equation}\label{eq:after-variance-identity}
  \sum_{x\in E_l\setminus A}\sum_{y\in U(x)}(F(x)-f(y))^2\leq \frac{1}{2(n-l)} \sum_{\substack{y,z\in E_{l+1}\\ |y-z|=2}}  (f(y)-f(z))^2.
\end{equation}
For such a pair $y,z$, let $w=y\vee z\in E_{l+2}$. Then $w\in H_{l+1}\subseteq D_{l+1}$, and $w$ is adjacent to both $y$ and $z$. Let $L(w)=\{y\in E_{\ell+1}: y\sim w\}$. Write $\bar{f}_{L(w)}=\frac{1}{|L(w)|}\sum_{y\in L(w)}f(y)$ for the average of $f$ over $L(w)$. Using the variance identity over the set $L(w)$, we get 
\[
\frac{1}{2}\sum_{y, z\in L(w)}(f(y)-f(z))^2 = |L(w)|\sum_{y\in L(w)}(f(y)-\bar{f}_{L(w)})^2\leq |L(w)|\sum_{y\in L(w)}(f(y)-f(w))^2\;.
\]
Using this together with \eqref{eq:after-variance-identity} and the fact that $|L(w)|=l+2$, we get 
$$
\sum_{x\in E_l\setminus A}\sum_{y\in U(x)}(F(x)-f(y))^2\leq \frac{(l+2)}{(n-l)} \sum_{\substack{y,w\in D_{l+1}\\ y\sim w}}  (f(y)-f(w))^2.
$$
Plugging this back in \eqref{eq:decomp-Dirichlet}, we finish the proof. 
\end{proof}

\begin{theorem}[Comparison for sandwiched monotone sets]\label{thm:sandwich-monotone-set}
    Let $0\leq s\leq r\leq n-1$, and let $A\subseteq \Omega_n$ be a monotone set satisfying 
    $$
    H_r\subseteq A\subseteq H_s. 
    $$
Then 
$$
 t_{\operatorname{LS}}(A)\leq t_{\operatorname{LS}}(H_s)\, \prod_{l=s}^{r-1} \left(1+\frac{l+2}{n-l}\right).
$$
\end{theorem}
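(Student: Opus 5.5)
We will iterate the one-layer harmonic extension of Lemma~\ref{lem:one-step-extension} to extend any function on $A$ to the cap $H_s$, and then pull the log-Sobolev inequality of $H_s$ back to $A$ through a comparison of entropies. With $D_l=A\cup H_l$, the hypothesis $H_r\subseteq A\subseteq H_s$ gives $D_r=A$ and $D_s=H_s$. Given $f:A\to\R$, we apply Lemma~\ref{lem:one-step-extension} successively for $l=r-1,r-2,\ldots,s$, producing functions $F_l:D_l\to\R$ with $F_r=f$ and $F_l|_{D_{l+1}}=F_{l+1}$. The ratios $|D_{l+1}|/|D_l|$ telescope, so that, with $F:=F_s$,
\[
\Ecal_{H_s}(F,F)\le \frac{|A|}{|H_s|}\prod_{l=s}^{r-1}\Big(1+\frac{l+2}{n-l}\Big)\,\Ecal_A(f,f).
\]

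Next we compare entropies. Since $F|_A=f$ and $A\subseteq H_s$, the entropy decomposition for the partition $H_s=A\cup(H_s\setminus A)$ (the same two-block decomposition used in the proof of Proposition~\ref{prop:radial-contribution}) gives
\[
\Ent_{\pi_s}(F^2)\ge \pi_s(A)\,\Ent_{\pi_A}(f^2)=\frac{|A|}{|H_s|}\Ent_{\pi_A}(f^2).
\]
Equivalently, this follows from the variational formula $\Ent_\pi(g)=\sup_{t>0}\Exp{\pi}{g\log g-g\log t-g+t}$, whose integrand is pointwise nonnegative, so dropping the points of $H_s\setminus A$ only decreases it.

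Finally, we apply the log-Sobolev inequality on $H_s$, $\Ent_{\pi_s}(F^2)\le t_{\operatorname{LS}}(H_s)\Ecal_{H_s}(F,F)$, and chain the two displays. The common factor $|A|/|H_s|$ cancels and yields $\Ent_{\pi_A}(f^2)\le t_{\operatorname{LS}}(H_s)\prod_{l=s}^{r-1}(1+\frac{l+2}{n-l})\,\Ecal_A(f,f)$. Since $f$ is arbitrary, this is the claim.

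The main point to check is that Lemma~\ref{lem:one-step-extension} applies at every stage. Its requirement $H_{l+1}\subseteq D_{l+1}$ holds by construction of $D_{l+1}$, and monotonicity of $A$ is assumed. When $E_l\subseteq A$ the step is trivial, since then $D_l=D_{l+1}$ and the factor $1+\frac{l+2}{n-l}\ge1$ is harmless. Beyond this, the only delicate point is keeping the normalization factors $|D_{l+1}|/|D_l|$ straight, so that they cancel exactly against $\pi_s(A)$ in the entropy comparison.
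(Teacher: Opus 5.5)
Your proposal is correct and is essentially the paper's proof: you extend $f$ layer by layer via Lemma~\ref{lem:one-step-extension} (rightly applied for $l=r-1$ down to $s$), telescope the ratios $|D_{l+1}|/|D_l|$ to $|A|/|H_s|$, bound $\Ent_{\pi_s}(F^2)$ from below by $\frac{|A|}{|H_s|}\Ent_{\pi_A}(f^2)$ using the two-block entropy decomposition, and cancel the common factor. One small slip in your side remark: the variational formula is $\Ent_\pi(g)=\inf_{t>0}\Exp{\pi}{g\log g-g\log t-g+t}$, not a supremum (the supremum is $+\infty$), and your pointwise-nonnegativity argument goes through with the infimum.
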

\begin{proof}
    For every $s\leq l\leq r$, set $D_l=A\cup H_l$. Observe that $D_s=H_s$ while $D_r=A$. Let $f:\, A\to \R$. Applying Lemma~\ref{lem:one-step-extension} successively for $l=s,\ldots, r-1$ , we obtain an extension $F:\, H_s\to \R$ of $f$ satisfying 
    \begin{equation}\label{eq:dirichlet-comparison}
        \Ecal_s(F,F)\leq \frac{|A|}{|H_s|}\prod_{l=s}^{r-1} \left(1+\frac{l+2}{n-l}\right)\, \Ecal_A(f,f). 
    \end{equation}
We now compare the entropies. Since $F_{\mid A}=f$, the entropy decomposition associated with the partition $H_s=A\cup (H_s\setminus A)$ gives 
\begin{equation}\label{eq:entropy-comparison}
   \Ent_{\pi_s}(F^2)\geq \frac{|A|}{|H_s|} \Ent_{\pi_A}(f^2). 
\end{equation}
Applying the log-Sobolev inequality on $H_s$, and combining it with \eqref{eq:dirichlet-comparison} and \eqref{eq:entropy-comparison}, we get
$$
\Ent_{\pi_A}(f^2)\leq t_{\operatorname{LS}}(H_s)  \prod_{l=s}^{r-1} \left(1+\frac{l+2}{n-l}\right)\, \Ecal_A(f,f).
$$
The statement follows. 
\end{proof}

\begin{proof}[Proof of Theorem~\ref{thm:highprobLSI}]
    Let $A_n$ be a uniformly chosen monotone subset of $\Omega_n$. By \eqref{eqn:TypicalA}, there exists a universal constant $K$ such that 
$$
    \mathbb{P}(H_{m+K}^{(n)}\subseteq A_n\subseteq H_{m-K}^{(n)})=1-o_n(1)\;.
$$
On this event, Theorem~\ref{thm:sandwich-monotone-set} together with Theorem~\ref{thm:hamming-caps} yields
$$
 t_{\operatorname{LS}}(A_n)\leq C_K  t_{\operatorname{LS}}(H_{m-K})\leq C_K' n,
$$
for some constants $C_K, C_K'$ depending only on $K$. 
To prove the matching lower bound, note that on the same event we have $H_{m+K}\subseteq A_n$ and therefore 
$$
\operatorname{diam}(A)\ge \operatorname{diam}(H_{m+K})\ge \frac{n}{2}-K.
$$
It remains to use a diameter lower bound for the logarithmic Sobolev constant (see \cite[Lemma 1]{SY25}) to conclude the proof. 
\end{proof}

\section{Monotone sets with large log-Sobolev constant}\label{sec:large-LSI}

In this section, we investigate monotone sets with large log-Sobolev constants. We show that the largest possible order is $n^2$ and that this order is attained by monotone sets that occupy at least a constant fraction of the hypercube. The lower bound arises from a simple bottleneck mechanism. Let $A\subseteq \Omega_n$ and $U\subseteq A$, and define the \emph{edge boundary} as 
\[
\partial_{A}U :=\{\{x, y\}: x\in U, y\in A\setminus U, x\sim y\}\;.
\] 
Applying the logarithmic Sobolev inequality to the indicator function $\mathbf{1}_U$ yields 
\begin{align}\label{equation: universal lower bound}
    t_{\operatorname{LS}}(A)\geq 2n\cdot \frac{|U|\,\log (|A|/|U|)}{|\partial_AU|}\;.
\end{align}
The lower bound depends not only on the size of the boundary $\partial_AU$, but also on the relative size of $U$ inside $A$. Consequently, a subset $U$ may force a large logarithmic Sobolev constant even when its boundary is comparable to its volume, provided that $U$ occupies only an exponentially small fraction of $A$. We end this section with a simple example to illustrate this. 

\begin{proposition}[Worst-case behavior at fixed density]\label{prop:wortcase-LSI}
    Let $\alpha\in (0,1/2]$ be fixed. Then 
    $$
     \sup_{\substack{A\subseteq \Omega_n \text{ monotone}\\ |A|\geq \alpha |\Omega_n|} }t_{\operatorname{LS}}(A)  \asymp_\alpha n^2
    $$
\end{proposition}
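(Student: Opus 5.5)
The plan has two halves. The upper bound $O_\alpha(n^2)$ holds for every dense monotone set and follows from the known Poincaré inequality plus the generic comparison between the log-Sobolev and spectral-gap constants. The lower bound $\Omega(n^2)$ comes from one explicit dense monotone set containing a single-point trap, plugged into the bottleneck bound \eqref{equation: universal lower bound}.

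\emph{Upper bound.} Let $A$ be monotone with $|A|\ge \alpha 2^n$. By Fei and Pinto~\cite{fei2025spectral} (see also \cite{chang2025poincar}), the relaxation time of the censored walk satisfies $t_{\operatorname{rel}}(A)\le C_\alpha n$. I would then invoke the standard comparison of Diaconis and Saloff-Coste~\cite{Dia96LSI} for a reversible chain with stationary measure $\pi$:
$$
t_{\operatorname{LS}}\le C\, t_{\operatorname{rel}}\,\log\frac{1}{\min_x \pi(x)}.
$$
Since $\pi_A$ is uniform, $\log(1/\min_x\pi_A(x))=\log|A|\le n\log 2$. Hence $t_{\operatorname{LS}}(A)\le C'_\alpha n^2$ uniformly over the supremum. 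The only care needed here is matching normalizations: our Dirichlet form has the $\frac{1}{2n}$ laziness convention, and $t_{\operatorname{rel}}$ and $t_{\operatorname{LS}}$ must be measured in the same units. These only affect constants.

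\emph{Lower bound.} Since $\alpha\le 1/2$, it suffices to exhibit one monotone set of density at least $1/2$. I would take
$$
A=\{x:\ x_1=1\}\cup\{x:\ |x|\ge n-1\},
$$
which is monotone as a union of two monotone sets and satisfies $|A|\ge 2^{n-1}$. Note that $A\setminus\{x_1=1\}=\{z\}$ with $z=(0,1,\dots,1)$. I would then apply \eqref{equation: universal lower bound} with $U=\{z\}$. The neighbors of $z$ are of two kinds:
\begin{itemize}
\item the point $(1,\dots,1)$, which lies in $A$;
\item the points of weight $n-2$ with first coordinate $0$, which lie outside $A$.
\end{itemize}
So $|\partial_A U|=1$, and therefore
$$
t_{\operatorname{LS}}(A)\ge 2n\log|A|\ge 2n\,(n-1)\log 2,
$$
which is of order $n^2$.

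The main obstacle is the upper bound, specifically making sure the cited Poincaré bound applies to all monotone sets of density at least $\alpha$ with constant depending only on $\alpha$, and that the $\log(1/\pi_{\min})$ comparison holds in the lazy, non-normalized convention used here. If \cite{Dia96LSI} is stated for a different normalization, I would rederive it directly. One route is Rothaus' lemma together with the $L^2\to L^\infty$ bound $\|f\|_\infty^2\le |A|\,\|f\|_2^2$. The other is the inequality $\Ent(f^2)\le \var(f)\log(1/\pi_{\min})$ up to constants, applied after centering. Either way only universal constants change.
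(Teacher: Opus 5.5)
Your proof is correct and follows the paper's argument. For the upper bound, you combine the Fei--Pinto Poincar\'e inequality with the $\log|A|$ entropy--variance comparison; the paper phrases this comparison as the log-Sobolev inequality for the rank-one chain, which is what underlies the Diaconis--Saloff-Coste bound you cite. For the lower bound, your set $\{x_1=1\}\cup\{(0,1,\dots,1)\}$ is the paper's $A_{n,q}$ with $q=1$ up to relabeling coordinates, and $q=1$ always suffices since $\alpha\le 1/2$, so the trap with $|\partial_A U|=1$ gives the same bound $2n\log|A|\gtrsim n^2$.
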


\begin{proof}
We begin with the upper bound. Let $A\subseteq \Omega_n$ be a monotone set $|A|\geq \alpha|\Omega_n|$. By the log-Sobolev inequality for the rank-one chain \cite[Theorem 2.2.9]{S-C97}, there exists a universal constant $C$ such that
   $$
   \Ent_{\pi_A}(f^2)\leq C\log  |A|\, \var_{\pi_A}(f),
   $$
   for any function $f:\, A\to \R$.
   
On the other hand, the sharp Poincaré inequality for monotone sets \cite{fei2025spectral} yields 
$$
\var_{\pi_A}(f)\leq C' n \Ecal_A(f,f),
$$
for some constant $C'$ depending only on $\alpha$. Combining the two estimates and using the fact that $|A|\leq |\Omega_n|=2^n$ yields the upper bound. 

For the reverse inequality, fix $q=q(\alpha)\geq 1$ satisfying $1-2^{-q}\geq \alpha$. Write $n=m+q$ and define 
$$
A_{n,q}= \{(u,v)\in \{0,1\}^{m}\times \{0,1\}^q:\, v\neq 0\}\cup \{z\},
$$
where $z$ is the vector whose first $m$ coordinates are equal to one and its last $q$ are equal to zero. 
Clearly, $A_{n,q}$ is a monotone set and 
$|A_{n,q}|= (1-2^{-q})2^n+1\geq \alpha |\Omega_n|$.
Note that $z$ has exactly $q$ neighbors in $A_{n,q}$. Therefore, setting $U=\{z\}$ in \eqref{equation: universal lower bound} , we get
\begin{align*}
    t_{\operatorname{LS}}(A_{n,q})\geq \frac{2n \log |A_{n,q}|}{q}.
\end{align*}
The proof is complete once the lower bound on $|A_{n,q}|$ is used.
\end{proof}

\begin{remark}
    Even though the examples in Proposition~\ref{prop:wortcase-LSI} have a logarithmic Sobolev constant of order $n^2$, they do not contradict the conjecture of Ding and Mossel. Indeed, the obstruction comes from the exceptional vertex $z$, whose mass is exponentially small. While the presence of $z$ forces $t_{\operatorname{LS}}(A)  \asymp_\alpha n^2$, the mean time to leave $z$ is of order $n$. After leaving $z$, the walk behaves essentially like a product chain, suggesting the mixing time remains of order $n\log n$. 

    The same example also shows that the modified logarithmic Sobolev inequality \cite{bobkov2006modified} cannot hold with a constant of order $n$; in fact, the above example provides a lower bound of order $n^2/\log n$. On the other hand, the arguments in Section~\ref{sec:LSI-caps} and \ref{sec:LSI-sandwich} extend with minor modifications, yielding order $n$ modified logarithmic Sobolev constant for all Hamming caps $H_s$, $1\leq s\leq n-1$, and for a random monotone set. 
\end{remark}

\section*{Acknowledgment.}
This work is supported in part by the NYUAD Center for Interdisciplinary Data Science \& AI (CIDSAI), funded by Tamkeen under the NYUAD Research Institute Award CG016.

\bibliographystyle{alpha} 
\bibliography{ref}

\newcommand{\etalchar}[1]{$^{#1}$}
\begin{thebibliography}{GGL{\etalchar{+}}00}

\bibitem[Bec75]{Beck}
William Beckner.
\newblock Inequalities in {F}ourier analysis.
\newblock {\em Ann. of Math. (2)}, 102(1):159--182, 1975.

\bibitem[BLM13]{BLM}
St\'ephane Boucheron, G\'abor Lugosi, and Pascal Massart.
\newblock {\em Concentration inequalities}.
\newblock Oxford University Press, Oxford, 2013.

\bibitem[Bol01]{Bollobas}
B\'ela Bollob\'as.
\newblock {\em Random graphs}, volume~73 of {\em Cambridge Studies in Advanced Mathematics}.
\newblock Cambridge University Press, Cambridge, {S}econd edition, 2001.

\bibitem[Bon70]{Bon}
Aline Bonami.
\newblock \'{E}tude des coefficients de {F}ourier des fonctions de {$L\sp{p}(G)$}.
\newblock {\em Ann. Inst. Fourier (Grenoble)}, 20:335--402, 1970.

\bibitem[BR06]{BR06}
B\'ela Bollob\'as and Oliver Riordan.
\newblock {\em Percolation}.
\newblock Cambridge University Press, New York, 2006.

\bibitem[BT06]{bobkov2006modified}
Sergey~G. Bobkov and Prasad Tetali.
\newblock Modified logarithmic {S}obolev inequalities in discrete settings.
\newblock {\em J. Theoret. Probab.}, 19(2):289--336, 2006.

\bibitem[CSY25]{chang2025poincar}
Fan Chang, Guowei Sun, and Lei Yu.
\newblock Poincar\'e inequality on the monotone sets revisited.
\newblock {\em arXiv preprint arXiv:2506.09852}, 2025.

\bibitem[DM14]{ding2014mixing}
Jian Ding and Elchanan Mossel.
\newblock Mixing under monotone censoring.
\newblock {\em Electron. Commun. Probab.}, 19:1--6, 2014.

\bibitem[DSC96]{Dia96LSI}
P.~Diaconis and L.~Saloff-Coste.
\newblock Logarithmic {S}obolev inequalities for finite {M}arkov chains.
\newblock {\em Ann. Appl. Probab.}, 6(3):695--750, 1996.

\bibitem[Fat19]{fathi2019quelques}
Max Fathi.
\newblock {\em Quelques applications du transport optimal en analyse et en probabilit{\'e}s}.
\newblock PhD thesis, Universit{\'e} Paul Sabatier (Toulouse 3), 2019.

\bibitem[Fel68]{Feller1}
William Feller.
\newblock {\em An introduction to probability theory and its applications. {V}ol. {I}}.
\newblock John Wiley \& Sons Inc., New York-London-Sydney, {T}hird edition, 1968.

\bibitem[FFP25]{fei2025spectral}
Yumou Fei and Renato Ferreira~Pinto, Jr.
\newblock On the spectral expansion of monotone subsets of the hypercube.
\newblock In {\em Approximation, randomization, and combinatorial optimization. {A}lgorithms and techniques}, volume 353 of {\em LIPIcs. Leibniz Int. Proc. Inform.}, pages Art. No. 42, 24. Schloss Dagstuhl. Leibniz-Zent. Inform., Wadern, 2025.

\bibitem[FK96]{FK96}
Ehud Friedgut and Gil Kalai.
\newblock Every monotone graph property has a sharp threshold.
\newblock {\em Proc. Amer. Math. Soc.}, 124(10):2993--3002, 1996.

\bibitem[FK13]{FK13}
James~Allen Fill and Jonas Kahn.
\newblock Comparison inequalities and fastest-mixing {M}arkov chains.
\newblock {\em Ann. Appl. Probab.}, 23(5):1778--1816, 2013.

\bibitem[FKG71]{FKG}
C.~M. Fortuin, P.~W. Kasteleyn, and J.~Ginibre.
\newblock Correlation inequalities on some partially ordered sets.
\newblock {\em Comm. Math. Phys.}, 22:89--103, 1971.

\bibitem[GGL{\etalchar{+}}00]{GGLRS}
Oded Goldreich, Shafi Goldwasser, Eric Lehman, Dana Ron, and Alex Samorodnitsky.
\newblock Testing monotonicity.
\newblock {\em Combinatorica}, 20(3):301--337, 2000.

\bibitem[Gri89]{Grim89}
Geoffrey Grimmett.
\newblock {\em Percolation}.
\newblock Springer-Verlag, New York, 1989.

\bibitem[Gri06]{G06}
Geoffrey Grimmett.
\newblock {\em The random-cluster model}, volume 333 of {\em Grundlehren der mathematischen Wissenschaften [Fundamental Principles of Mathematical Sciences]}.
\newblock Springer-Verlag, Berlin, 2006.

\bibitem[Gro75]{Gross}
Leonard Gross.
\newblock Logarithmic {S}obolev inequalities.
\newblock {\em Amer. J. Math.}, 97(4):1061--1083, 1975.

\bibitem[Hol11]{Hol11}
Alexander~E. Holroyd.
\newblock Some circumstances where extra updates can delay mixing.
\newblock {\em J. Stat. Phys.}, 145(6):1649--1652, 2011.

\bibitem[KKL88]{KKL88}
Jeff Kahn, Gil Kalai, and Nathan Linial.
\newblock The influence of variables on {B}oolean functions.
\newblock In {\em 29th {A}nnual {S}ymposium on {F}oundations of {C}omputer {S}cience}, pages 68--80. IEEE Comput. Soc. Press, Washington, DC, 1988.

\bibitem[Kor81]{Korshunov81}
A.~D. Korshunov.
\newblock The number of monotone {B}oolean functions.
\newblock {\em Problemy Kibernet.}, (38):5--108, 272, 1981.

\bibitem[Kor03]{Korshunov03}
A.~D. Korshunov.
\newblock Monotone {B}oolean functions.
\newblock {\em Uspekhi Mat. Nauk}, 58(5(353)):89--162, 2003.

\bibitem[KS06]{KS06}
Gil Kalai and Shmuel Safra.
\newblock Threshold phenomena and influence: perspectives from mathematics, computer science, and economics.
\newblock In {\em Computational complexity and statistical physics}, St. Fe Inst. Stud. Sci. Complex., pages 25--60. Oxford Univ. Press, New York, 2006.

\bibitem[Led99]{Led99Concentration}
Michel Ledoux.
\newblock Concentration of measure and logarithmic {S}obolev inequalities.
\newblock In {\em S\'eminaire de {P}robabilit\'es, {XXXIII}}, volume 1709 of {\em Lecture Notes in Math.}, pages 120--216. Springer, Berlin, 1999.

\bibitem[LY98]{LeeYau}
Tzong-Yow Lee and Horng-Tzer Yau.
\newblock Logarithmic {S}obolev inequality for some models of random walks.
\newblock {\em Ann. Probab.}, 26(4):1855--1873, 1998.

\bibitem[Mic99]{Miclo}
L.~Miclo.
\newblock An example of application of discrete {H}ardy's inequalities.
\newblock {\em Markov Process. Related Fields}, 5(3):319--330, 1999.

\bibitem[O'D14]{ODon14}
Ryan O'Donnell.
\newblock {\em Analysis of {B}oolean functions}.
\newblock Cambridge University Press, New York, 2014.

\bibitem[Sal25]{salez2025modern}
Justin Salez.
\newblock Modern aspects of markov chains: entropy, curvature and the cutoff phenomenon.
\newblock {\em arXiv preprint arXiv:2508.21055}, 2025.

\bibitem[SC97]{S-C97}
Laurent Saloff-Coste.
\newblock Lectures on finite {M}arkov chains.
\newblock In {\em Lectures on probability theory and statistics ({S}aint-{F}lour, 1996)}, volume 1665 of {\em Lecture Notes in Math.}, pages 301--413. Springer, Berlin, 1997.

\bibitem[SY25]{SY25}
Justin Salez and Pierre Youssef.
\newblock Intrinsic regularity in the discrete log-{S}obolev inequality, 2025.
\newblock To appear in the Journal of European Mathematical Society.

\end{thebibliography}

\end{document}